\newtheorem{theo}{Theorem}[section]
\newtheorem{definition}[theo]{Definition}
\newtheorem{lemma}[theo]{Lemma}
\newtheorem{proposition}[theo]{Proposition}
\newtheorem{corollary}[theo]{Corollary}
  \newtheorem{example}[theo]{Example}
  \newtheorem{remark}[theo]{Remark}
\newenvironment{proof}{    
  \noindent
  \textbf{Proof.}}{
  \hfill $\Box$
  \vspace{3mm}
}
\numberwithin{equation}{section}
\newcommand{\N}{\mathbb{N}} 
\newcommand{\R}{\mathbb{R}} 
\newcommand{\C}{\mathbb{C}} 
\DeclareMathOperator{\supp}{supp}
\newcommand{\eps}{\varepsilon}
\begin{document}

\title{Composition operators on the Schwartz space}
\author{Antonio Galbis, Enrique Jord\'a}
\maketitle
\begin{abstract}
 We study composition operators on the Schwartz space of rapidly decreasing functions. We prove that such a composition operator is never a compact operator and we obtain necessary or sufficient conditions for the range of the composition operator to be closed. These conditions are expressed in terms of multipliers for the Schwartz class and the closed range property of the corresponding operator considered in the space of smooth functions.
 
\noindent Keywords: Composition operator; Composite function problem; Space of rapidly decreasing functions. MSC2010: 47B33, 46F05,  47A05 
\end{abstract}

\section{Introduction.}
Composition operators, as well as multiplication operators, are  widely studied in the last years mainly in spaces of holomorphic functions. One can check \cite{cowen,shapiro} and the references given therein. Much less seems to be known about the functional analytic properties of the composition operator when it is defined in real spaces of smooth functions. To be more precise let us denote by $C^{\infty}(\R^d)$ the Fr\'echet space of all smooth functions endowed with its natural topology of uniform convergence of the derivatives on the compact sets of $\R^d.$ In the case that $\varphi:\R^d\to\R^d$ is an analytic function, properties of the composition operator $C_{\varphi}:C^{\infty}(\R^d)\to C^{\infty}(\R^d),\ f\mapsto f\circ \varphi,$ are well known since long (\cite{glaeser,malgrange,tougeron}). However, when the symbol $\varphi$ is only assumed to be smooth, the operator is not well understood. In particular, to characterize when the operator has closed range becomes a tough problem. Yet, there are 
some recent important advances in this direction concerning the one variable case. Kenessey and Wengenroth characterized in \cite{kw} the injective symbols $\varphi:\R\to\R^d$ such that $C_\varphi:C^\infty(\R^d)\to C^\infty(\R)$ has closed range. A sufficient condition on the symbol $\varphi:\R\to\R$ for the operator $C_{\varphi}:C^\infty(\R)\to C^{\infty}(\R)$ to have closed range was obtained by Przestacki. This condition is also necessary under some mild assumptions on $\varphi$ (\cite{przstacki1,przstacki2,przstacki3}).  A characterization of those classes of ultradifferentiable functions which are closed under composition was obtained in \cite{fg}.

Besides the space of smooth functions and the space of real analytic functions, one of the most important spaces in mathematical analysis is the Schwartz space $S(\R^d)$ of rapidly decreasing functions. The multipliers in $S(\R^d)$ are the functions $F:\R^d\to \R^d$ such that the multiplication operator $M_F:S(\R^d)\to S(\R^d),\ f\mapsto Ff,$ is well defined and continuous. The set of all multipliers is denoted by $O_M(\R^d)$ and consists of those smooth functions whose 
derivatives of arbitrary order have polynomial growth at infinity. Bonet, Frerick and the 
second author have characterized in \cite{bfj2012} the multipliers $\varphi\in O_M$($:=O_M(\R)$) such that $M_{\varphi}: S(\R)\to S(\R),\ f\mapsto f\varphi,$ has closed range. 

 In this paper we study composition operators defined in the Schwartz space $S(\R)$ of one variable rapidly decreasing functions. We characterize the smooth functions $\varphi:\R\to \R$ for which the composition operator $C_\varphi:S(\R)\to S(\R), f\mapsto f\circ \varphi,$ is well defined and continuous and, if so, we study some of its properties. In particular, we prove that $C_\varphi$ can never be a compact operator and obtain necessary or sufficient conditions for the range of the composition operator to be closed in $S(\R)$. These conditions are expressed in terms of the multiplication operator studied in \cite{bfj2012} and the closed range of the corresponding operator considered in the space of smooth functions, involving then the conditions considered by Przestacki in \cite{przstacki1,przstacki2,przstacki3}. We remark that the characterization of the symbols is valid for the several variables case.

Recall that $S(\R)$ consists of those smooth functions $f:\R\to\R$ with the property that
$$ \pi_n(f):=\sup_{x\in \R}\sup_{1\leq j\leq n}(1+|x|^2)^n|f^{(j)}(x)|<\infty\ $$ for each $n\in\N.$ $S(\R)$ is a Fr\'echet space when endowed with the topology generated by the sequence of seminorms $\left(\pi_n\right)_{n\in \N}.$ Usually  the Schwartz space refers to the space of complex valued functions satisfying this conditions, which is of fundamental importance in harmonic analysis. If we denote this space by $S_\C(\R)$, then we have $S_\C(\R)=S(\R)\oplus iS(\R)$. Since our objective is to study the behaviour of the composition operator, we can restrict to the real space and the conclusions remain valid for the corresponding composition operator defined in $S_\C(\R)$.

\section{Characterization of symbols.}

\begin{definition}
 A function $\varphi\in C^{\infty}(\R)$ is called a {\em symbol} for $S(\R)$ if  $f\circ \varphi\in S(\R)$ whenever $f\in S(\R)$.
\end{definition}

 If $\varphi$ is a symbol, then the composition operator $C_{\varphi}:S(\R)\to S(\R)$, $f\mapsto f\circ\varphi,$ is continuous by the  closed graph theorem. Our first aim is to characterize the symbols for $S(\R)$ and we begin with a simple necessary condition which, in particular, implies that every symbol $\varphi$ for $S(\R)$ is a semiproper map, that is, for every compact set $K\subset \R$ there exists a compact set $L\subset \R$ such that $\varphi(\R)\cap K \subset \varphi(L).$
\begin{lemma}
\label{lemma:1}
If $\varphi$ is a symbol for $S(\R)$ then
$$\lim_{|x|\to\infty} |\varphi(x)|=\infty.$$ Consequently, either $\varphi(\R) = \R$ or $\varphi(\R) = [a, +\infty)$ for some $a$ or $\varphi(\R) = (-\infty, b]$ for some $b.$
\end{lemma}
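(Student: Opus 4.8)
The plan is to prove the limit by contradiction and then read off the description of $\varphi(\R)$ from one–dimensional topology.

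Assume that $|\varphi(x)|$ does not tend to $\infty$ as $|x|\to\infty$. Then there are $M>0$ and a sequence $(x_n)$ with $|x_n|\to\infty$ and $|\varphi(x_n)|\le M$ for all $n$. After passing to a subsequence we may assume that $(x_n)$ tends to $+\infty$ or to $-\infty$ and that $\varphi(x_n)\to y_0$ for some $y_0\in[-M,M]$. Next I fix a test function: choose a compactly supported $f\in C^\infty(\R)$, which then lies in $S(\R)$, with $f(y_0)\neq 0$ (a smooth bump centred at $y_0$ will do). Since $\varphi$ is a symbol, $g:=f\circ\varphi\in S(\R)$, and every element of $S(\R)$ vanishes at infinity, so $g(x_n)\to 0$. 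On the other hand, by continuity of $f$ we have $g(x_n)=f(\varphi(x_n))\to f(y_0)\neq 0$, a contradiction. Hence $\lim_{|x|\to\infty}|\varphi(x)|=\infty$.

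For the consequence, note first that for each $M>0$ there is $R>0$ with $|\varphi(x)|>M$ whenever $|x|>R$. Each of the two rays $(R,\infty)$ and $(-\infty,-R)$ is connected and $\varphi$ omits the value $0$ on it, so $\varphi$ has constant sign there; letting $M\to\infty$ and observing that this sign cannot change as $M$ grows, we conclude that $\varphi(x)$ has a limit equal to $+\infty$ or to $-\infty$ as $x\to+\infty$, and likewise as $x\to-\infty$. If both limits equal $+\infty$, then $\varphi$ exceeds $\varphi(0)$ outside some compact interval, hence attains a global minimum $a$ on $\R$; since $\varphi(\R)$ is an interval that is unbounded above and contains $a$, it equals $[a,+\infty)$. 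The case in which both limits equal $-\infty$ gives $\varphi(\R)=(-\infty,b]$ in the same way, and if the two limits differ the intermediate value theorem yields $\varphi(\R)=\R$.

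The only place where the hypothesis is genuinely used is the choice of $f$ together with the fact that Schwartz functions vanish at infinity; the rest is soft, and I do not anticipate a real obstacle. If one preferred not to invoke decay of $S(\R)$-functions directly, one could instead note that $\pi_1(g)<\infty$ forces $g'\in L^1(\R)$, so that the limits $g(\pm\infty)$ exist, and then exclude a nonzero limit by a slightly more careful estimate along $(x_n)$; but the argument above is cleaner.
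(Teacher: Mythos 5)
Your proof is correct and follows essentially the same route as the paper's: argue by contradiction, extract an unbounded sequence along which $\varphi$ converges to a finite limit, and compose with a compactly supported bump that is nonzero at that limit to violate the decay required of functions in $S(\R)$. You additionally spell out the soft topological argument for the description of $\varphi(\R)$, which the paper states without proof; that part is also correct.
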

\begin{proof}
We proceed by contradiction. Assume that there  exists a sequence $(x_j)$ with $\begin{displaystyle}\lim_{j\to \infty}|x_j|=\infty\end{displaystyle}$ and $\ell\in\R$ such that $\begin{displaystyle}\lim_{j\to\infty} \varphi(x_j) = \ell\end{displaystyle}$. Let $f$ be a compactly supported function such that $f(\ell)=1$. Then $f\in S(\R),$ however $f\circ \varphi\notin S(\R)$ since

$$\lim_{j\to \infty}\left|x_j\left(f\circ\varphi\right)(x_j)\right| = \infty.$$
\end{proof}

For the proof of the next result we recall Fa\`{a} di Bruno formula (see e.g.
\cite[1.3.1]{krantz}):
$$
(f\circ \varphi)^{(n)}(x) = \sum \frac{n!}{k_1!\ldots
k_n!}f^{(k)}(\varphi(x))\left(\frac{\varphi'(x)}{1!}\right)^{k_1}\ldots
\left(\frac{\varphi^{(n)}(x)}{n!}\right)^{k_n}$$
\noindent
where the sum is extended over all $(k_1,\ldots,k_n)\in\N_0^n$ such that
$k_1 + 2k_2 + \ldots + nk_n = n$ and $k:=k_1 + \ldots + k_n.$

\begin{theo}
\label{symbols}
A function $\varphi\in C^{\infty}(\R)$ is a symbol for $S(\R)$ if and only if the following conditions are satisfied:
\begin{itemize}
\item[(i)] For all $j\in \N_0$ there exist $C,p>0$ such that
$$\left|\varphi^{(j)}(x)\right|\leq C(1+\left|\varphi(x)\right|^2)^p$$

\noindent for every $x\in\R$.

\item[(ii)] There exists $k>0$ such that $|\varphi(x)|\geq |x|^{1/k}$ for all $|x|\geq k.$

\end{itemize}

\end{theo}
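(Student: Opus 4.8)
The plan is to prove the two implications separately, using the Faà di Bruno formula as the bridge between derivatives of $f\circ\varphi$ and the seminorms $\pi_n$.

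For the necessity of (i) and (ii), I would proceed by contradiction in each case, constructing an explicit test function $f\in S(\R)$ whose composition $f\circ\varphi$ fails to lie in $S(\R)$. For (ii), if the polynomial lower bound fails, then along a sequence $|x_j|\to\infty$ we have $|\varphi(x_j)|$ growing slower than any power of $|x_j|$; combined with Lemma~\ref{lemma:1} (which already guarantees $|\varphi(x_j)|\to\infty$), one picks a nonnegative $f\in S(\R)$ that decays only like a fixed power — e.g.\ $f(t)=(1+t^2)^{-N}$ suitably smoothed, or simply $f\in S(\R)$ with $f(t)\geq (1+|t|)^{-N}$ on the relevant range — and checks that $(1+x_j^2)^n (f\circ\varphi)(x_j)\to\infty$ for $n$ large, contradicting $f\circ\varphi\in S(\R)$. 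For (i), if the bound $|\varphi^{(j)}(x)|\leq C(1+|\varphi(x)|^2)^p$ fails for some fixed $j$, then there is a sequence $x_j$ along which $|\varphi^{(j)}(x_j)|$ dominates every power of $1+|\varphi(x_j)|^2$. Here I would choose $f\in S(\R)$ whose $j$-th derivative is large (e.g.\ a fixed bump translated and rescaled, or again a negative-power function) near the points $\varphi(x_j)$, so that the single Faà di Bruno term $f^{(1)}(\varphi(x))\varphi^{(j)}(x)$ (take $k_j=1$, all other $k_i=0$) forces $(f\circ\varphi)^{(j)}$ to grow, while $\varphi$ being a symbol and Lemma~\ref{lemma:1} control the location of $\varphi(x_j)$. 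The mild subtlety is that one must be able to realize ``$f^{(1)}$ large at $\varphi(x_j)$'' together with ``$f\in S(\R)$'', which is fine because the $\varphi(x_j)$ tend to infinity and one has room to interpolate.

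For the sufficiency, assume (i) and (ii) and take $f\in S(\R)$; I would bound $\pi_n(f\circ\varphi)$ in terms of finitely many seminorms $\pi_m(f)$. Fix $n$ and $1\leq j\leq n$. Apply Faà di Bruno: each term is a constant times $f^{(k)}(\varphi(x))\prod_{i=1}^n \left(\varphi^{(i)}(x)/i!\right)^{k_i}$ with $k=\sum k_i\leq n$ and $\sum i k_i=n$. By (i), for each $i\leq n$ there are constants so that $|\varphi^{(i)}(x)|\leq C_i(1+|\varphi(x)|^2)^{p_i}$; multiplying over $i$ and using $\sum k_i\leq n$, the product of the $\varphi$-derivative factors is bounded by $C(1+|\varphi(x)|^2)^{P}$ for some $P=P(n)$ and uniform constant $C=C(n)$. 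Meanwhile $f\in S(\R)$ gives $|f^{(k)}(\varphi(x))|\leq \pi_{N}(f)(1+|\varphi(x)|^2)^{-N}$ for any $N$ (here $k\leq n\leq N$), so choosing $N=N(n)$ large enough that $N\geq P+M$ for a target exponent $M$, we get $|(f\circ\varphi)^{(j)}(x)|\leq C'\pi_N(f)(1+|\varphi(x)|^2)^{-M}$. Finally invoke (ii): for $|x|\geq k$ we have $1+|\varphi(x)|^2\geq |x|^{2/k}$, hence $(1+|\varphi(x)|^2)^{-M}\leq |x|^{-2M/k}$; choosing $M$ with $2M/k\geq 2n$ (i.e.\ $M\geq nk$) yields $(1+x^2)^n|(f\circ\varphi)^{(j)}(x)|\leq C''\pi_N(f)$ for $|x|\geq k$, while on the compact set $|x|\leq k$ the quantity is trivially bounded since $f\circ\varphi$ is smooth. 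Taking the sup over $x$ and $j$ gives $\pi_n(f\circ\varphi)\leq C\,\pi_N(f)$, so $C_\varphi$ is well defined (and continuous).

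The main obstacle is the necessity direction, and specifically the bookkeeping needed to isolate a single Faà di Bruno term: one must guarantee that when $|\varphi^{(j)}(x_j)|$ blows up relative to powers of $1+|\varphi(x_j)|^2$, no cancellation or competing term in the sum rescues $f\circ\varphi$, and that the chosen $f$ can be taken in $S(\R)$ with the prescribed behavior of $f^{(1)}$ (or $f^{(k)}$) at the moving points $\varphi(x_j)$. Passing to a subsequence so that the $\varphi(x_j)$ are well separated and tend to infinity, and using that $f$ near each such point can be prescribed independently, should resolve this; one should also double-check the degenerate possibility that $\varphi(x_j)$ stays bounded, which is exactly ruled out by Lemma~\ref{lemma:1}.
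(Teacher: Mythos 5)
Your sufficiency argument is correct and is essentially the paper's: Fa\`a di Bruno plus (i) to control the $\varphi$-derivative factors by a power of $1+|\varphi(x)|^2$, and (ii) to trade powers of $1+|\varphi(x)|^2$ for powers of $1+x^2$. The necessity half, however, has a genuine gap in each of its two parts.

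For (ii): you propose a witness $f$ that ``decays only like a fixed power'', e.g.\ $f(t)=(1+t^2)^{-N}$ smoothed, or $f\in S(\R)$ with $f(t)\geq (1+|t|)^{-N}$ at the points $y_j=\varphi(x_j)$. No such $f$ lies in $S(\R)$: since $|y_j|\to\infty$ by Lemma~\ref{lemma:1}, any $f\in S(\R)$ must satisfy $(1+y_j^2)^{m}f(y_j)\to 0$ for every $m$, which is incompatible with $f(y_j)\geq (1+|y_j|)^{-N}$ for a fixed $N$. The whole point of condition (ii) is that polynomial-decay witnesses are unavailable; the correct construction (as in the paper) is a sum of disjointly supported bumps centered at the $y_j$ with heights $|y_j|^{-j}$ --- rapidly decreasing as a function of $y_j$, hence $f\in S(\R)$, yet still $f(y_j)\geq |x_j|^{-1}$ because the failure of (ii) gives $|y_j|^{j}\leq |x_j|$. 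That quantitative interplay between the two scales is the content of the step and is missing from your sketch.

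For (i): you correctly identify the obstacle --- isolating the single Fa\`a di Bruno term $f'(\varphi(x))\,\varphi^{(n)}(x)$ without interference from the terms involving $f^{(k)}$ with $k\geq 2$ --- but you do not resolve it. ``Prescribing $f$ independently near each $\varphi(x_j)$'' is the right idea, but the actual device is to build $f$ from translates of a single bump $\rho$ chosen with $\rho'(0)=1$ and $\rho^{(i)}(0)=0$ for $2\leq i\leq n$, so that at $x=x_k$ every competing term vanishes identically (all the derivatives of $f$ of order between $2$ and $n$ are zero at $y_k$), not merely ``is small''. You must also specify the bump heights: taking them to be $(1+|y_j|^2)^{-j}$ makes $f\in S(\R)$, while the assumed failure $|\varphi^{(n)}(x_j)|\geq j\,(1+|\varphi(x_j)|^2)^{j}$ then yields $|(f\circ\varphi)^{(n)}(x_j)|\geq j$. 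Without these two explicit choices the necessity argument does not close.
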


\begin{proof}
Let us first assume that $\varphi$ is a symbol for $S(\R)$ and prove that condition (i) is satisfied. If this is not the case, there exist $n\in\N$ and a sequence $(x_j)_j\subset \R$ such that

$$|x_j|+1<|x_{j+1}|$$ and

$$\left|\varphi^{(n)}(x_j)\right|\geq j(1+\left|\varphi(x_j)\right|^2)^j$$ for every $j\in {\mathbb N}_0.$

\noindent By Lemma \ref{lemma:1} we can assume, taking a subsequence if necessary,

$$|\varphi(x_j)|+1<|\varphi(x_{j+1})|,\ j\in \N_0.$$ Let us consider $\rho\in \mathcal{D}[-\frac{1}{2},\frac{1}{2}]$ with $\rho'(0)=1$ and $\rho^{(j)}(0)=0$ for $2\leq j\leq n,$ and define

$$f(x):=\sum_{j=1}^{\infty}\frac{\rho(x-y_j)}{(1+|y_j|^2)^j}, \ y_j:=\varphi(x_j).$$ The terms of the sum defining $f$ are disjointly supported. Moreover

$$\supp f\subseteq \bigcup_{j=1}^\infty [y_j-\frac{1}{2},y_j+\frac{1}{2}].$$ If $x\in I_k:=[y_k-\frac{1}{2},y_k+\frac{1}{2}]$ then

$$f^{(j)}(x)=\frac{\rho^{(j)}(x-y_k)}{(1+|y_k|^2)^k}.$$ Hence, for each $j,m\in\N$ there is $C>0$ not depending on $k$ such that

\begin{equation}
\label{eq1}
(1+|x|^2)^m\left|f^{(j)}(x)\right|\leq \|\rho^{(j)}\|_{\infty}\frac{(1+|x|^2)^m}{(1+|y_k|^2)^k}\leq C (1+|y_k|^2)^{m-k},
\end{equation}

\noindent from where it follows

$$\lim_{|x|\to \infty}(1+|x|^2)^m\left|f^{(j)}(x)\right|=0,$$

\noindent i.e. $f\in S(\R)$. Now, an easy consequence of the Fa\`a di Bruno formula gives

$$\left|(f\circ\varphi)^{(n)}(x_k)\right|=\left|\frac{\rho'(0)\varphi^{(n)}(x_k)}{(1+|y_k|^2)^k}\right|=\left|\frac{\varphi^{(n)}(x_k)}{(1+|\varphi(x_k)|^2)^k}\right|\geq k,$$

\noindent and then $f\circ \varphi\not\in S(\R)$, contradicting that $\varphi$ is a symbol for $S(\R)$.

Assume that  $\varphi$ is a symbol for $S(\R)$ but (ii) does not hold. Let $(x_j)_j$ be a sequence in $\R$ such that $|x_j|\geq j$ and $|\varphi(x_j)|^j\leq |x_j|$. We take $\rho$ as above. By Lemma \ref{lemma:1}, we can assume  without loss of generality,
$$
|\varphi(x_j)|+1<|\varphi(x_{j+1})|,\ j\in \N_0.
$$ Now we define

$$f(x):=\sum_{j=1}^\infty\frac{\rho(x-y_j)}{|y_j|^j}, \ y_j:=\varphi(x_j).$$

\noindent We have $f\in S(\R)$ since there exists $C$ independent on $k$ such that $(1+x^2)\leq C(1+ y_k^2)$ for each $x\in I_k = [y_k-\frac{1}{2},y_k+\frac{1}{2}]$. However

$$\liminf_{j\to \infty} |x_j|\cdot\left|\left(f\circ\varphi\right)(x_j)\right| = \liminf_{j\to \infty} |x_j|/|\varphi(x_j)|^j\geq 1,$$

\noindent and hence $f\circ\varphi$ is not in $S(\R)$, a contradiction since $\varphi$ is a symbol for $S(\R)$.

Let us now assume that $\varphi$ satisfies conditions (i) and (ii) and prove that $\varphi$ is a symbol for $S(\R).$ Condition (ii) in the statement implies the boundedness of the function $h(x):=(1+|x|^2)/(1+|\varphi(x)|^2)^k$. Thus, for a fixed $n\in\N$, we can get $C>0$ and $p > 0$ satisfying (i) for $1\leq j\leq n$ and also $|h(x)|^n\leq C$ for all $x\in\R$. This implies that, 
\begin{equation}
\label{1}
(1+x^2)^n\leq  C(1+|\varphi(x)|^2)^{kn}
\end{equation}

\noindent and

 \begin{equation}
 \label{2}
 \left|\varphi^{(j)}(x)\right|\leq C(1+|\varphi(x)|^2)^{p}
 \end{equation}
\noindent for every $1\leq j\leq n$ and $x\in \R.$ 

 \noindent From condition (\ref{2}) and Fa\`a di Bruno formula we get constants $M > 0$ and $t\in \N$,  such that, for each $1\leq j\leq n$ and $f\in S(\R)$

  \begin{equation}
 \label{4}
 \left|(f\circ\varphi)^{(j)}(x)\right|\leq M\sum_{1\leq \ell\leq j}(1+|\varphi(x)|^2)^t\left|f^{(\ell)}(\varphi(x))\right|.
 \end{equation}

\noindent We assume without loss of generality $t>n$. Hence
$$
\sup_{x\in \R}\sup_{1\leq j\leq n}\left(1+x^2\right)^n\left|\left(f\circ \varphi\right)^{(j)}(x)\right| 
$$ is less than or equal to 
$$ MC \sup_{x\in \R}\sum_{1\leq \ell\leq n}(1+|\varphi(x)|^2)^{kn+t}\left|(f^{(\ell)}(\varphi(x))\right|\leq  
nMC \pi_{kn+t}(f) < \infty.
$$ and we conclude $f\circ\varphi\in S(\R).$
\end{proof}

\begin{remark}
Lemma \ref{lemma:1} and Theorem \ref{symbols} remain valid for  symbols in $S(\R^n).$ This is a consequence of the fact that there is $\rho\in\mathcal{D}(\R^n)$ with $\partial^{\alpha}\rho(0)=1$ for $|\alpha|=1,$ $\rho^{(\alpha)}(0)=0$ for $|\alpha|>1,$ as follows from the Borel theorem together with the existence of compactly supported $C^\infty$ functions. Thus, test functions and constant symbols are examples of symbols for $S(\R^d)$ which do not belong to $O_M(\R^d)$.
\end{remark}

From the conditions in Theorem \ref{symbols} the following examples are followed.

\begin{example}
\begin{itemize}
\item[(a)] Non constant polynomials are symbols for $S(\R).$ Also, if $P(x)$ is a polynomial with $\begin{displaystyle}\lim_{|x|\to +\infty}
P(x) = +\infty\end{displaystyle}$ then $$\varphi(x) = \exp\left(P(x)\right)$$ is a symbol for $S(\R)$.
\item[(b)] The symbols for $S(\R)$ are stable under products. Moreover, if $\varphi, \psi$ are symbols and $|\psi(x)| \leq c|\varphi(x)|$ for some $0 < c < 1$ then also $\varphi + \psi$ is a symbol.
\end{itemize}
\end{example}

\section{Compactness of composition operators}

We recall that a continuous operator $T:E\to F$ between two locally convex spaces is said to be compact if there is some $0$-neighbourhood $U$ in $E$ with the property that $T(U)$ is a relatively compact subset of $F.$

We now prove that a composition operator acting on $S(\R)$ is never compact. This contrasts with the behavior of composition operators in Banach spaces of analytic functions. In fact, compactness of composition operators has been extensively studied on various spaces of analytic functions.

The key for the lack of compactness in our setting is the following well-known lemma, the proof of which is included for the convenience of the reader. For a bounded function $f:\R\to\R$  we denote $$\|f\|_\infty=\sup\{|f(x)|:\ x\in\R\}.$$

\begin{lemma}
\label{lemma:2}
For $f\in \mathcal{D}[a,b]$ and $n\in \N$ let define $\|f\|_n:=\sum_{k=0}^{n}\|f^{(k)}\|_{\infty}$.  The norms $\|\cdot\|_n$ and $\|\cdot\|_{n+1}$ are not equivalent. Moreover the system of norms $(\|\cdot\|_n)_{n\in\N}$ defines the topology on $\mathcal{D}[a,b]$.
\end{lemma}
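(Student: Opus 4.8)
The plan is to settle the topological assertion at once and then, for each fixed $n$, to produce a sequence in $\mathcal D[a,b]$ along which the ratio $\|f\|_{n+1}/\|f\|_n$ is unbounded, which rules out any inequality $\|f\|_{n+1}\le C\|f\|_n$ (the reverse inequality being automatic). Throughout I would assume $a<b$ (the degenerate case is trivial) and fix once and for all some $\rho\in\mathcal D[a,b]$ with $\rho\not\equiv0$; I pick $x_0\in(a,b)$ with $|\rho(x_0)|=\|\rho\|_\infty$ and, by continuity of $|\rho|$, choose $\eta>0$ with $[x_0-\eta,x_0+\eta]\subset(a,b)$ and $|\rho(x)|\ge\tfrac12\|\rho\|_\infty$ for $|x-x_0|\le\eta$.

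For the \emph{moreover} part there is nothing deep: the topology of $\mathcal D[a,b]$ is the one generated by the seminorms $f\mapsto\|f^{(k)}\|_\infty$, $k\in\N_0$, and since
$$\max_{0\le k\le n}\|f^{(k)}\|_\infty\le\|f\|_n\le(n+1)\max_{0\le k\le n}\|f^{(k)}\|_\infty,$$
the sequence $(\|\cdot\|_n)_n$ is an equivalent fundamental system of seminorms and hence defines that topology. If instead one wants the topology induced from $S(\R)$, one combines this with $\|f\|_\infty\le(b-a)\|f'\|_\infty$ and the fact that on $[a,b]$ the weights $(1+|x|^2)^m$ are bounded above and below by positive constants, reaching the same conclusion.

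For the non-equivalence, fix $n\in\N$ and set $f_j(x):=\rho(x)\sin(jx)\in\mathcal D[a,b]$. The upper estimate is the routine part: Leibniz' rule gives, for $0\le k\le n$ and $j\ge1$,
$$\|f_j^{(k)}\|_\infty\le\sum_{i=0}^{k}\binom{k}{i}\|\rho^{(i)}\|_\infty\,j^{\,k-i}\le A_k\,j^{\,k},$$
with $A_k$ independent of $j$, so that $\|f_j\|_n\le A\,j^{\,n}$ for $j\ge1$, where $A:=\sum_{k\le n}A_k$. For the lower estimate I would isolate the top-order term,
$$f_j^{(n+1)}(x)=\rho(x)\,j^{\,n+1}\sin\!\Big(jx+\tfrac{(n+1)\pi}{2}\Big)+R_j(x),\qquad\|R_j\|_\infty\le B\,j^{\,n},$$
$R_j$ collecting the summands in which at least one derivative falls on $\rho$. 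For $j$ large the phase $jx+\tfrac{(n+1)\pi}{2}$ sweeps an interval of length $2j\eta>\pi$ as $x$ ranges over $[x_0-\eta,x_0+\eta]$, hence it contains a point $x^\ast$ there with $|\sin(jx^\ast+\tfrac{(n+1)\pi}{2})|=1$, and then
$$\|f_j\|_{n+1}\ge\|f_j^{(n+1)}\|_\infty\ge|f_j^{(n+1)}(x^\ast)|\ge\tfrac12\|\rho\|_\infty\,j^{\,n+1}-B\,j^{\,n}\ge\tfrac14\|\rho\|_\infty\,j^{\,n+1}$$
for all sufficiently large $j$. Comparing the two bounds yields $\|f_j\|_{n+1}/\|f_j\|_n\ge(\|\rho\|_\infty/4A)\,j\to\infty$, so no $C>0$ satisfies $\|f\|_{n+1}\le C\|f\|_n$ on $\mathcal D[a,b]$ and the norms $\|\cdot\|_n$, $\|\cdot\|_{n+1}$ are not equivalent.

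The only step needing a moment's care is the lower bound: the maximum of $|\rho|$ need not sit at a crest of the fast oscillation, but the crests of $x\mapsto\sin(jx+c)$ are spaced $\pi/j$ apart, so for large $j$ one of them lies in the fixed interval $[x_0-\eta,x_0+\eta]$; this pigeonhole remark is exactly what makes the estimate go through. Everything else is Leibniz' rule and the triangle inequality.
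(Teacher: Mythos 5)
Your proof is correct, but it takes a genuinely different route from the paper's. The paper disposes of the non-equivalence in two lines by a soft contradiction argument: if $\|\cdot\|_{n}$ and $\|\cdot\|_{n+1}$ were equivalent, then applying the assumed inequality to $f'$ and using $\|f^{(k)}\|_\infty=\|(f')^{(k-1)}\|_\infty$ propagates the equivalence inductively to all $\|\cdot\|_k$ with $k\geq n$; since these norms generate the topology, $\mathcal{D}[a,b]$ would be normable, contradicting the known fact that it is not. You instead produce an explicit witness, $f_j=\rho\cdot\sin(j\,\cdot)$, and show $\|f_j\|_n\leq A j^n$ while $\|f_j\|_{n+1}\geq \tfrac14\|\rho\|_\infty j^{n+1}$ for large $j$; the one delicate point, namely that a crest of the fast oscillation can be made to land where $|\rho|$ is bounded below, you handle correctly via the observation that the crests are spaced $\pi/j$ apart and so one falls in the fixed window $[x_0-\eta,x_0+\eta]$ once $2j\eta>\pi$. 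Your treatment of the \emph{moreover} part (the $\|\cdot\|_n$ form a fundamental system equivalent to the seminorms $f\mapsto\|f^{(k)}\|_\infty$) is also fine and is essentially what the paper takes for granted. What each approach buys: the paper's argument is shorter but leans on the external fact that $\mathcal{D}[a,b]$ is a non-normable Fr\'echet space; yours is longer but self-contained and quantitative, giving a concrete sequence along which the ratio $\|f_j\|_{n+1}/\|f_j\|_n$ grows like $j$, which is more information than the lemma requires. Both proofs are valid.
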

\begin{proof} We argue by contradiction and assume that $\|\cdot\|_n$ and $\|\cdot\|_{n+1}$ are equivalent norms. Then, from $\|f^{(k)}\|_{\infty}=\|(f')^{(k-1)}\|_{\infty}$ and proceeding inductively we conclude that $\|\cdot\|_k$ is equivalent to $\|\cdot\|_{k+1}$ for $k\geq n.$ This is a contradiction since $\mathcal{D}[a,b]$ is not normable.
\end{proof}

\begin{proposition}\label{prop:compact}
Let $\varphi\in C^{\infty}(\R)$ be given and assume $\varphi([a,b])=[c,d]$ and $|\varphi'(x)|\geq \delta>0$ for all $x\in [a,b]$. Then $C_{\varphi}:\mathcal{D}[c,d]\to C^{\infty}(\R),$ $f\mapsto f\circ \varphi,$ is not compact.
\end{proposition}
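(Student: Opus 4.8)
The plan is to argue by contradiction: if $C_\varphi:\mathcal{D}[c,d]\to C^\infty(\R)$ were compact, then there would be a $0$-neighbourhood $U$ in $\mathcal{D}[c,d]$ whose image is relatively compact in $C^\infty(\R)$. Since the topology of $\mathcal{D}[c,d]$ is generated by the norms $\|\cdot\|_n$ of Lemma \ref{lemma:2}, we may take $U=\{f\in\mathcal{D}[c,d]:\|f\|_n\le 1\}$ for some fixed $n$. The key point is that restriction to the compact set $[a,b]$ is a continuous map $C^\infty(\R)\to C^\infty[a,b]$, so $\{f\circ\varphi:\|f\|_n\le 1\}$ would be relatively compact in $C^\infty[a,b]$; in particular it would be bounded for the norm $\|g\|_{n+1}^{[a,b]}:=\sum_{k=0}^{n+1}\sup_{[a,b]}|g^{(k)}|$, i.e.\ there is $M>0$ with $\|f\circ\varphi\|_{n+1}^{[a,b]}\le M$ for all $f$ with $\|f\|_n\le 1$.

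Next I would transfer this back to $\mathcal{D}[c,d]$ using the hypotheses $\varphi([a,b])=[c,d]$ and $|\varphi'|\ge\delta>0$ on $[a,b]$. The lower bound on $\varphi'$ means $\varphi$ is a diffeomorphism from $[a,b]$ onto $[c,d]$ with $C^\infty$ inverse $\psi$; hence every $f\in\mathcal{D}[c,d]$ can be written as $f=(f\circ\varphi)\circ\psi$ on $[c,d]$. By Fa\`a di Bruno (applied to $\psi$, whose derivatives are bounded on $[c,d]$ since $\psi\in C^\infty[c,d]$, using $\psi'=1/(\varphi'\circ\psi)$ and $|\varphi'|\ge\delta$ to control $\psi'$), there is a constant $A>0$, depending only on $\varphi,\psi$ and $n$, such that
$$\|f\|_{n+1}\ \le\ A\,\|f\circ\varphi\|_{n+1}^{[a,b]}\quad\text{for all }f\in\mathcal{D}[c,d].$$
Combining the two displays, $\|f\|_{n+1}\le AM$ whenever $\|f\|_n\le 1$, which by homogeneity says $\|f\|_{n+1}\le AM\,\|f\|_n$ for all $f\in\mathcal{D}[c,d]$. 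That is exactly the assertion that $\|\cdot\|_n$ and $\|\cdot\|_{n+1}$ are equivalent on $\mathcal{D}[c,d]$, contradicting Lemma \ref{lemma:2}. Hence $C_\varphi$ is not compact.

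The main obstacle is the Fa\`a di Bruno bookkeeping in the step $f=(f\circ\varphi)\circ\psi$: one must check that the coefficients appearing when differentiating the composition involve only the derivatives of $\psi$ on $[c,d]$ (which are uniformly bounded constants) times derivatives of $f\circ\varphi$ of order $\le n+1$ evaluated on $[a,b]$, so that no derivative of $f$ itself of order exceeding $n+1$ is ever needed and the constant $A$ is genuinely independent of $f$. Everything else — reducing the $0$-neighbourhood to a $\|\cdot\|_n$-ball, passing to the restriction to $[a,b]$, and extracting boundedness from relative compactness — is routine. One should also note the harmless normalisation that $U$ can be taken of the stated form because the $\|\cdot\|_n$ are increasing and generate the topology, so any $0$-neighbourhood contains such a ball, and compactness for a smaller neighbourhood is a weaker statement, which is all we need to contradict.
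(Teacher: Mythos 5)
Your proof is correct, but it is organized differently from the paper's. Both arguments ultimately reduce compactness to the non-equivalence of the norms $\|\cdot\|_n$ and $\|\cdot\|_{n+1}$ on $\mathcal{D}[c,d]$ (Lemma \ref{lemma:2}), and both use $|\varphi'|\geq\delta$ together with Fa\`a di Bruno; the difference is the direction in which the estimate is run. The paper applies Fa\`a di Bruno to $f\circ\varphi$ itself, isolates the leading term $f^{(n)}(\varphi(x))(\varphi'(x))^n$ to get the \emph{lower} bound $|(f\circ\varphi)^{(n)}(x)|\geq |f^{(n)}(\varphi(x))|\delta^n-\lambda_n\|f\|_{n-1}$, and then directly exhibits, inside any $0$-neighbourhood, a sequence $f_j$ with $\|f_j\|_{p-1}$ fixed but $\|f_j^{(p)}\|_\infty$ arbitrarily large, whose images are unbounded in $C^\infty(\R)$. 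You instead assume boundedness of the image of a $\|\cdot\|_n$-ball, invert $\varphi$ on $[a,b]$ (legitimate: $|\varphi'|\geq\delta$ forces strict monotonicity, so $\varphi$ is a diffeomorphism onto $[c,d]$ with $C^\infty$ inverse $\psi$), and apply Fa\`a di Bruno to $f=(f\circ\varphi)\circ\psi$ to get the \emph{upper} bound $\|f\|_{n+1}\leq A\|f\circ\varphi\|_{n+1}^{[a,b]}$, from which the equivalence of consecutive norms follows by homogeneity. Your route buys a cleaner one-line contradiction with Lemma \ref{lemma:2} and avoids constructing the witness sequence explicitly (that construction is, in effect, hidden inside the proof that the norms are inequivalent); the paper's route is more self-contained in that it only ever differentiates in the forward direction and produces the unbounded family concretely. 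All the steps you flag as needing care do go through: relative compactness implies boundedness, restriction to $[a,b]$ is continuous into $C^\infty[a,b]$, the derivatives of $\psi$ up to order $n+1$ are bounded on the compact interval $[c,d]$, and Fa\`a di Bruno for $(f\circ\varphi)\circ\psi$ involves derivatives of $f\circ\varphi$ only up to order $n+1$, so the constant $A$ is independent of $f$.
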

\begin{proof}
Let $n\in\N$ be fixed. The Fa\'a Di Bruno formula implies that there exist polynomials $Q_1,Q_2,\ldots,Q_{n-1}$ of $n$ variables such that
$$
\begin{array}{ll}
\big(f\circ\varphi\big)^{(n)}(x) & =f^{(n)}\big(\varphi(x)\big)\big(\varphi'(x)\big)^n \\ & \\ & + \sum_{m=1}^{n-1} f^{(m)}\big(\varphi(x)\big)Q_m\big(\varphi'(x),\varphi''(x),\ldots, \varphi^{(n)}(x)\big).\end{array}$$

This implies the existence of $\lambda_n>0$ such that, for all $x\in [a,b],$
\begin{equation}
\label{5}
\begin{array}{ll}
\left|\big(f\circ\varphi\big)^{(n)}(x)\right|& \geq \left|f^{(n)}\big(\varphi(x)\big)\right|\delta^n-\lambda_n\sum_{k=0}^{n-1}\|f^{(k)}\|_{\infty}\\ & \\ & = \left|f^{(n)}\big(\varphi(x)\big)\right|\delta^n-\lambda_n\|f\|_{n-1}.
\end{array}
\end{equation}

For each 0-neighbourhood $U$ in $\mathcal{D}[c,d]$ there exist $\eps>0$ and $p\in\N$  such that

$$\{f\in \mathcal{D}[c,d]:\ \|f\|_{p-1}\leq\eps\}\subseteq U.$$

\noindent According to Lemma \ref{lemma:2} there exists a sequence $(f_j)_j\subset \mathcal{D}[c,d]$ such that $\|f_j\|_{p-1}=\eps$ and $$\delta^p\|f_{j}^{(p)}\|_{\infty} > \lambda_p \eps + j.$$  Now we take $x_j\in [a,b]$ such that
$\|f_{j}^{(p)}\|_\infty=\left|f_{j}^{(p)}\big(\varphi(x_j)\big)\right|$. We get from (\ref{5})

$$\sup\{\left|\big(f_j\circ \varphi\big)^{(p)}(x)\right|:\ x\in [a,b]\}\geq \left|\big(f_j\circ\varphi\big)^{(p)}(x_j)\right|\geq j$$ for every $j\in \N,$ from where it follows that $C_{\varphi}(U)$ is unbounded.
\end{proof}

\begin{theo}
\label{te:compact}
Let $X$ be a locally convex space such that $\mathcal{D}(\R)\hookrightarrow X \hookrightarrow C^{\infty}(\R)$ with continuous inclusions and let $\varphi\in C^{\infty}(\R)$ be a non constant function with the property that $f\circ\varphi\in X$ for each $f\in X.$ Then $C_{\varphi}:X\to X$ is not compact. In particular, $C_{\varphi}: S(\R)\to S(\R)$ is not compact for any  symbol $\varphi$ for $S(\R)$.
\end{theo}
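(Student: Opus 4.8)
The plan is to reduce the statement to Proposition~\ref{prop:compact} by localizing near a point where $\varphi'$ does not vanish. Since $\varphi$ is non constant and smooth, there exists a point $x_0\in\R$ with $\varphi'(x_0)\neq 0$; by continuity we can choose a nondegenerate closed interval $[a,b]$ containing $x_0$ on which $|\varphi'|\geq\delta$ for some $\delta>0$, and then $\varphi|_{[a,b]}$ is a diffeomorphism onto some $[c,d]=\varphi([a,b])$ with $c<d$. This puts us in exactly the situation of Proposition~\ref{prop:compact}.

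Next I would use the two continuous inclusions. Because $\mathcal{D}(\R)\hookrightarrow X$ continuously and $\mathcal{D}[c,d]$ carries its own Fr\'echet topology (for which, by Lemma~\ref{lemma:2}, the norms $\|\cdot\|_n$ are a defining system and which coincides with the subspace topology induced from $\mathcal{D}(\R)$), the inclusion $\mathcal{D}[c,d]\hookrightarrow X$ is continuous. On the other side, $X\hookrightarrow C^{\infty}(\R)$ continuously. Now suppose, for contradiction, that $C_\varphi:X\to X$ is compact: there is a $0$-neighbourhood $U$ in $X$ with $C_\varphi(U)$ relatively compact in $X$. Pulling $U$ back, $U\cap\mathcal{D}[c,d]$ is a $0$-neighbourhood in $\mathcal{D}[c,d]$, and pushing forward, $C_\varphi(U)$ is relatively compact in $X$ hence (via the continuous inclusion into $C^\infty(\R)$) relatively compact, in particular bounded, in $C^{\infty}(\R)$. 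Thus the restriction of $C_\varphi$ to $\mathcal{D}[c,d]$ would send the $0$-neighbourhood $U\cap\mathcal{D}[c,d]$ to a bounded subset of $C^{\infty}(\R)$; that is exactly what Proposition~\ref{prop:compact} forbids, since there we produced a sequence in any such $0$-neighbourhood whose images have $\sup_{[a,b]}|(f_j\circ\varphi)^{(p)}|\to\infty$, contradicting boundedness in $C^{\infty}(\R)$.

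Finally, for the last sentence: if $\varphi$ is a symbol for $S(\R)$ then $\varphi\in C^\infty(\R)$ and, by Lemma~\ref{lemma:1}, $\varphi$ is non constant (indeed $|\varphi(x)|\to\infty$); moreover $\mathcal{D}(\R)\hookrightarrow S(\R)\hookrightarrow C^\infty(\R)$ with continuous inclusions and $f\circ\varphi\in S(\R)$ for all $f\in S(\R)$ by definition of symbol, so the general statement applies with $X=S(\R)$.

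The only point requiring care — and the main obstacle — is the second step: verifying that compactness of $C_\varphi:X\to X$ forces the restricted map $\mathcal{D}[c,d]\to C^\infty(\R)$ to carry a $0$-neighbourhood to a bounded set. This hinges on the fact that the $\mathcal{D}[c,d]$-topology agrees with the topology induced from $X$ (equivalently from $\mathcal{D}(\R)$), which is where Lemma~\ref{lemma:2} and the hypothesis $\mathcal{D}(\R)\hookrightarrow X$ are used; everything else is a routine chase through the continuous inclusions.
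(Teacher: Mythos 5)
Your proof is correct and follows essentially the same route as the paper: localize to an interval $[a,b]$ where $|\varphi'|\geq\delta>0$, and use the factorization of $\mathcal{D}[c,d]\to C^{\infty}(\R)$ through $C_\varphi:X\to X$ via the two continuous inclusions to contradict Proposition~\ref{prop:compact}. The only difference is cosmetic: you spell out explicitly that compactness of $C_\varphi$ would force the image of a $0$-neighbourhood of $\mathcal{D}[c,d]$ to be bounded in $C^{\infty}(\R)$, which is exactly what the proposition rules out.
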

\begin{proof}
Since $\varphi$ is non constant there exists $[a,b]\subseteq \R$ such that $\varphi$ restricted to $[a,b]$ is monotonic, and then there exists $\delta>0$ such that $|\varphi'(x)|\geq\delta$ for each $x\in [a,b]$. The hypothesis imply the continuity of the inclusion $i_1: \mathcal{D}[c,d]\hookrightarrow X$ for $[c,d]=\varphi([a,b])$. Proposition \ref{prop:compact} implies that the composition operator $$\hat{C}_{\varphi}:\mathcal{D}[c,d]\to C^{\infty}(\R), f\mapsto f\circ \varphi,$$ is not compact. The conclusion follows since $\hat{C}_{\varphi}$ decomposes as $i_2\circ C_{\varphi}\circ i_1$, with $i_2$ being the continuous inclusion from $X$ into $C^{\infty}(\R)$.

\end{proof}

\section{Closed range composition operators}

Our aim is to obtain necessary or sufficient conditions for the range of the composition operator to be closed in $S(\R)$. We will relate the closed range property of a composition operator $C_\varphi:S(\R)\to S(\R)$ with the closed range property of $C_\varphi:C^\infty(\R)\to C^\infty(\R)$ (characterized by \cite{przstacki2}) and the closed range property of multiplication operators on $S(\R),$ which has been characterized in \cite{bfj2012}. Concrete examples of composition operators on $S(\R)$ lacking the closed range property are provided.

\begin{lemma}
Let $\varphi$ be a symbol for $S(\R)$. If $C_{\varphi}:S(\R)\to S(\R)$ has closed range then $C_{\varphi}:C^\infty(\R)\to C^{\infty}(\R)$ has also closed range.
\end{lemma}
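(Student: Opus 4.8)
The plan is to prove the contrapositive: assuming $C_\varphi:C^\infty(\R)\to C^\infty(\R)$ does \emph{not} have closed range, I will show $C_\varphi:S(\R)\to S(\R)$ does not have closed range either. Recall that for a continuous operator between Fr\'echet spaces, having closed range is equivalent to being (topologically) a homomorphism onto its image, and for injective operators this amounts to the operator being a topological isomorphism onto its range; note that $C_\varphi$ is injective on $C^\infty(\R)$ precisely when $\varphi(\R)$ is dense in $\R$, which by Lemma \ref{lemma:1} means $\varphi(\R)=\R$, while if $\varphi(\R)$ is a proper half-line the image $C_\varphi(C^\infty(\R))$ is still naturally identified with $C^\infty$ of that half-line. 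In either case the failure of closed range gives, by the usual characterization via seminorms, a sequence $(g_k)\subset C^\infty(\R)$ and an index $m$ such that $C_\varphi g_k \to 0$ in $C^\infty(\R)$ but $g_k \not\to 0$; more precisely, there is a compact set $K$ and $\delta>0$ with $\sup_{K}\sup_{j\le m}|g_k^{(j)}|\ge \delta$ for all $k$, while $\sup_{L}\sup_{j\le k}|(g_k\circ\varphi)^{(j)}|\to 0$ on every compact $L$.

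The key step is then to transport this failure into $S(\R)$. First I would localize: by semiproperness of $\varphi$ (guaranteed by Lemma \ref{lemma:1}) there is a fixed compact set $L_0$ with $\varphi(L_0)\supseteq \varphi(\R)\cap K \supseteq K\cap \varphi(\R)$, and since the bad behaviour of the $g_k$ on $K$ is only "seen" by $C_\varphi$ through $K\cap\varphi(\R)$, I may assume the relevant obstruction already lives over $K\cap\varphi(\R)$. Next, multiply each $g_k$ by a single fixed cutoff function $\chi\in\mathcal{D}(\R)$ with $\chi\equiv 1$ on a neighbourhood of $K$: then $\chi g_k\in S(\R)$ (indeed in $\mathcal{D}(\R)$), the $S(\R)$-seminorms $\pi_n(\chi g_k)$ are, up to constants depending only on $\chi$ and $n$, equal to $\sup_{\supp\chi}\sup_{j\le n}|g_k^{(j)}|$, so $\pi_m(\chi g_k)\ge c\,\delta >0$ and $(\chi g_k)$ does not converge to $0$ in $S(\R)$. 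On the other hand $(\chi g_k)\circ\varphi = (\chi\circ\varphi)\cdot(g_k\circ\varphi)$, and here one uses that $\chi\circ\varphi$ has support contained in $\varphi^{-1}(\supp\chi)$, which by condition (ii) of Theorem \ref{symbols} is \emph{bounded}; hence $\chi\circ\varphi\in\mathcal{D}(\R)$, and multiplication by it maps the $C^\infty$-null sequence $g_k\circ\varphi$ (restricted to that fixed compact set) to an $S(\R)$-null sequence. Therefore $C_\varphi(\chi g_k)\to 0$ in $S(\R)$ while $\chi g_k\not\to 0$, contradicting closed range of $C_\varphi$ on $S(\R)$.

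The main obstacle I anticipate is making the transport of the seminorm obstruction precise and honest, i.e.\ arguing that the failure of closed range over $C^\infty(\R)$ can genuinely be witnessed by functions supported in a bounded set. The subtlety is that a priori the sequence $(g_k)$ exhibiting non-closed range could be spread out over all of $\R$; one must use that the relevant quotient seminorm estimate that fails involves only a \emph{compact} set $K$, together with the elementary fact that for any $g\in C^\infty(\R)$ and any $\chi\in\mathcal{D}(\R)$, $C_\varphi(\chi g)$ and $\chi' C_\varphi(g)$ (for a suitable cutoff $\chi'$ on the target side with $\chi'\circ\varphi \equiv \chi\circ\varphi$ near the relevant set) differ in a controlled way — essentially $C_\varphi$ commutes with localization because $\varphi$ is continuous. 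A clean way to sidestep any remaining difficulty is to phrase the whole argument as: the restriction maps $C^\infty(\R)\to C^\infty(\varphi(\R))$ and $S(\R)\to$ (the Fr\'echet space of rapidly decreasing functions on $\varphi(\R))$ intertwine the two composition operators with the pull-back $f\mapsto f\circ\varphi$ on functions defined only on $\varphi(\R)$, and then closed range passes down because the inclusion of the relevant $\mathcal{D}[c,d]$ into $S(\R)$ is a topological embedding onto a closed subspace while landing in $C^\infty(\R)$ continuously — the same decomposition $i_2\circ C_\varphi\circ i_1$ used in the proof of Theorem \ref{te:compact}. Either route reduces the lemma to the routine fact that a homomorphism stays a homomorphism after pre- and post-composition with closed embeddings.
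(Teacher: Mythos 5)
Your strategy is genuinely different from the paper's. The paper argues directly: given $(f_n\circ\varphi)_n\to g$ in $C^\infty(\R)$, it localizes with cut-offs $\chi_k\circ\varphi$ (compactly supported because $|\varphi(x)|\to\infty$), uses that $\mathcal{D}[-M_k,M_k]$ carries the same topology as a subspace of $C^\infty(\R)$ and of $S(\R)$ to produce local solutions $h_k\in S(\R)$ with $g\cdot(\chi_k\circ\varphi)=h_k\circ\varphi$, and then patches these into a preimage $f$, invoking Whitney's extension theorem when $\varphi(\R)$ is a proper half-line. Your contrapositive via the open mapping theorem, using the same localization idea, could in principle avoid both the patching and the Whitney step. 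But as written there is a genuine gap at the decisive moment: the conclusion \lqq $C_\varphi(\chi g_k)\to 0$ in $S(\R)$ while $\chi g_k\not\to 0$, contradicting closed range\rqq\ is not a contradiction. By Lemma \ref{lemma:1} the case $\varphi(\R)=[a,+\infty)$ (or $(-\infty,b]$) is allowed, and then $C_\varphi$ on $S(\R)$ has the nontrivial kernel $\{h\in S(\R): h|_{\varphi(\R)}=0\}$; a constant sequence equal to a nonzero kernel element already satisfies your two displayed properties. Closed range is equivalent to the induced map on $S(\R)/\ker C_\varphi$ being a homomorphism onto its image, so it is only contradicted by a lower bound on the quotient seminorm $\inf\{\pi_m(\chi g_k+h): h\in S(\R),\ h|_{\varphi(\R)}=0\}$. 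The same defect appears at the start: the witness of non-closed range in $C^\infty(\R)$ must be normalized as $\inf_h\sup_K\sup_{j\le m}|(g_k+h)^{(j)}|\ge\delta$ (infimum over smooth $h$ vanishing on $\varphi(\R)$), not as $\sup_K\sup_{j\le m}|g_k^{(j)}|\ge\delta$. The gap is repairable --- since $\chi\equiv 1$ near $K$ one has $(\chi g_k+h)^{(j)}=(g_k+h)^{(j)}$ on $K$, and every Schwartz $h$ vanishing on $\varphi(\R)$ is an admissible competitor for the $C^\infty$ quotient seminorm, so the lower bound transfers --- but this bookkeeping is precisely the non-routine content of your route and it is missing.

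The fallback sketched in your last paragraph does not work. The inclusion $S(\R)\hookrightarrow C^\infty(\R)$ is continuous with dense range, not an embedding onto a closed subspace, so no \lqq routine fact\rqq\ transfers the homomorphism property between the two composition operators through it; moreover, pre-composing a homomorphism with a closed embedding need not yield a homomorphism (restricting a quotient map to a closed subspace is the standard counterexample). The decomposition $i_2\circ C_\varphi\circ i_1$ through $\mathcal{D}[c,d]$ is used in the paper only to refute compactness, where transferring unboundedness of the image of a $0$-neighbourhood suffices; it gives no information about the closedness of the range of $C_\varphi$ on all of $C^\infty(\R)$.
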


\begin{proof}
Let $(f_n)_n$ be a sequence in $C^{\infty}(\R)$ such that $(f_n\circ \varphi)_n$ is  convergent to $g$ in $C^{\infty}(\R)$. Our aim is to find $f\in C^\infty(\R)$ such that $g = f\circ\varphi.$ For each $k\in\N$ we consider $\chi_k\in \mathcal{D}[-2k,2k]$ with the property that $\chi_k|_{[-k,k]}=1$. Since $\begin{displaystyle}\lim_{|x|\to \infty}|\varphi(x)|=\infty\end{displaystyle}$ there exist $M_k\in \N$ such that $\varphi^{-1}([-2k,2k])\subseteq [-M_k,M_k].$ Then $(h\circ \varphi)\cdot (\chi_k\circ\varphi)\in \mathcal{D}[-M_k,M_k]$ for each $h\in C^{\infty}(\R)$ and $k\in \N.$ Hence we have
$$\lim_{n\to \infty} (f_n\cdot\chi_k)\circ\varphi=\lim_{n\to \infty} (f_n\circ\varphi)\cdot (\chi_k\circ \varphi)= g\cdot(\chi_k\circ\varphi)$$

\noindent in $\mathcal{D}[-M_k,M_k]$, which is a topological subspace of both $C^{\infty}(\R)$ and $S(\R)$. Hence there exists $h_k\in S(\R)$ such that 

$$g\cdot(\chi_k\circ\varphi)=h_k\circ \varphi.$$

\noindent We observe that, for every natural numbers $k\leq q,$ the condition $\left|\varphi(x)\right|\leq k$ implies $h_k\big(\varphi(x)\big) = h_q\big(\varphi(x)\big),$ since
$$g(x) (\chi_k\circ \varphi)(x)=g(x) (\chi_{q}\circ\varphi)(x) = g(x).$$ Consequently we can define $f\big(\varphi(x)\big) := h_k\big(\varphi(x)\big)$ whenever $|\varphi(x)|\leq k.$ It easily follows that $f$ is $C^\infty$ in the interior of $\varphi(\R)$ and $f\big(\varphi(x)\big) = g(x)$ for all $x\in\R.$ In the case $\varphi(\R) = \R$ we are done. In the case $\varphi(\R) = [a, +\infty),$ $f$ admits right derivatives of every order at $a$, and, by Whitney extension  theorem \cite{whitney} we can extend $f$ to a function in $C^{\infty}(\R)$. A similar argument gives the conclusion in the case that $\varphi(\R) = (-\infty, b].$
\end{proof}

In order to obtain examples of composition operators on $S(\R)$ lacking the closed range property we first consider symbols satisfying the additional condition that there exist $(q_j)_j\subset {\mathbb N}$ and positive constants $(C_j)_j$ such that
$$
\left|\varphi^{(j)}(x)\right|\leq C_j\left(1 + x^2\right)^{q_j}\left(1 + \left|\varphi(x)\right|\right)\ \ \ (\ast)
$$ Examples of symbols with property $(\ast):$
\begin{itemize}
 \item[(a)] Each symbol $\varphi\in O_M.$
 \item[(b)] Every smooth function $\varphi$ such that $\varphi(x) = sign(x)e^{|x|}$ or $\varphi(x) = sign(x)e^{x^2}$ for large values of $|x|.$
\end{itemize}

\begin{proposition}
\label{asterisco}
{\rm Let $\varphi$ be a surjective symbol with property $(\ast)$ such that $C_\varphi:S(\R)\to S(\R)$ has closed range. Then $f\in C^\infty({\mathbb R})$ and $f\circ\varphi\in S(\R)$ imply $f\in S(\R).$
}
\end{proposition}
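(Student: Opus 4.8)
The plan is to extract a quantitative open--mapping estimate from the closed range hypothesis and to apply it to truncations of $f$. Since $\varphi$ is surjective, $C_\varphi:S(\R)\to S(\R)$ is injective, so having closed range it is a topological isomorphism onto the Fr\'echet space $R:=C_\varphi(S(\R))$; the open mapping theorem then yields, for each $n\in\N$, an index $m=m(n)\in\N$ and $C_n>0$ with
$$\pi_n(h)\le C_n\,\pi_{m}(h\circ\varphi)\qquad\text{for every }h\in S(\R).$$
Now fix $\chi\in\mathcal{D}(\R)$ with $0\le\chi\le1$, $\chi\equiv1$ on $[-1,1]$ and $\supp\chi\subseteq[-2,2]$, set $\chi_k(t):=\chi(t/k)$, and, given $f\in C^\infty(\R)$ with $f\circ\varphi\in S(\R)$, put $h_k:=f\chi_k\in\mathcal{D}(\R)\subseteq S(\R)$. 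Since $\chi_k\equiv1$ on $(-k,k)$, the function $h_k$ coincides with $f$ on $(-k,k)$, whence $\pi_n(f)\le\sup_{k\ge1}\pi_n(h_k)$ for each $n$. Writing $\theta_k:=\chi_k\circ\varphi$, so that $h_k\circ\varphi=(f\circ\varphi)\,\theta_k$, the task reduces to bounding $\pi_n(h_k)\le C_n\,\pi_{m}\big((f\circ\varphi)\,\theta_k\big)$ uniformly in $k$.

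The core of the argument, and the step I expect to be the main obstacle, is the claim that property $(\ast)$ forces the derivatives of $\theta_k$ to be bounded independently of $k$: for each $l\in\N_0$ there exist $\widetilde C_l>0$ and $Q_l\in\N_0$ such that
$$\big|\theta_k^{(l)}(x)\big|\le \widetilde C_l\,(1+x^2)^{Q_l}\qquad(x\in\R,\ k\ge1).$$
To see this I would expand $\theta_k^{(l)}=(\chi_k\circ\varphi)^{(l)}$ by Fa\`a di Bruno's formula. For $l\ge1$ every summand contains a factor $\chi_k^{(r)}(\varphi(x))=k^{-r}\chi^{(r)}(\varphi(x)/k)$ with $r=k_1+\dots+k_l\ge1$, which is supported in $\{k\le|\varphi(x)|\le2k\}$, times a product $\prod_i(\varphi^{(i)}(x)/i!)^{k_i}$. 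Estimating each $\varphi^{(i)}$ by $(\ast)$ produces a factor $(1+x^2)^{\sum q_ik_i}(1+|\varphi(x)|)^{r}$, and on the support of the summand $(1+|\varphi(x)|)^{r}\le(3k)^{r}$, so the potentially divergent product $k^{-r}(1+|\varphi(x)|)^{r}$ is at most $3^{r}\le3^{l}$. It is precisely the first (not a higher) power of $1+|\varphi(x)|$ in $(\ast)$ that produces this cancellation. Summing the finitely many summands then gives the bound, with $Q_l$ depending only on $l$ and the exponents $q_1,\dots,q_l$.

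With this estimate in hand the rest is routine. By Leibniz's rule,
$$(1+x^2)^{m}\big|\big((f\circ\varphi)\,\theta_k\big)^{(j)}(x)\big|\le\sum_{i=0}^{j}\binom{j}{i}\widetilde C_{j-i}\,(1+x^2)^{\,m+Q_{j-i}}\big|(f\circ\varphi)^{(i)}(x)\big|,$$
and since $f\circ\varphi\in S(\R)$ the right-hand side is dominated, uniformly in $x$ and in $k\ge1$, by a finite multiple of $\pi_{m+Q}(f\circ\varphi)$ with $Q:=\max_{l\le m}Q_l$. Hence $\pi_{m}\big((f\circ\varphi)\,\theta_k\big)$, and therefore $\pi_n(h_k)$, is bounded independently of $k$, so $\pi_n(f)<\infty$ for every $n$; as $f$ is already smooth this gives $f\in S(\R)$. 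The only points needing a line of care beyond the main estimate are that $h_k\in\mathcal{D}(\R)\subseteq S(\R)$ (so the isomorphism inequality applies to it) and the identity $h_k=f$ on $(-k,k)$ used to pass from the $h_k$ back to $f$.
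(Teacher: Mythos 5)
Your proof is correct. The truncation $h_k=f\chi_k$, the identity $h_k\circ\varphi=(f\circ\varphi)\cdot(\chi_k\circ\varphi)$, and the key computation --- Fa\`a di Bruno plus property $(\ast)$ plus the localization of $\supp\chi_k^{(r)}$ to $\{|\varphi(x)|\leq 2k\}$, so that the factor $k^{-r}$ cancels the $r$-th power of $1+|\varphi(x)|$ and yields bounds on $(\chi_k\circ\varphi)^{(l)}$ by polynomials independent of $k$ --- are exactly the steps in the paper's proof of its claim that $(f_k\circ\varphi)_k$ is bounded in $S(\R)$. Where you genuinely diverge is in how the closed-range hypothesis is exploited. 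The paper keeps the qualitative route: it uses that $S(\R)$ is Montel to extract from the bounded set $(f_k\circ\varphi)_k$ a subsequence converging in $S(\R)$ to some $g$, invokes closedness of the range to write $g=h\circ\varphi$ with $h\in S(\R)$, and then uses surjectivity of $\varphi$ to identify $h=f$ from $f\circ\varphi=h\circ\varphi$. You instead observe that surjectivity of $\varphi$ makes $C_\varphi$ injective, so closed range plus the open mapping theorem upgrades the hypothesis to the a priori estimate $\pi_n(h)\leq C_n\,\pi_{m(n)}(h\circ\varphi)$ for all $h\in S(\R)$, which you apply directly to the truncations and combine with $\pi_n(f)\leq\sup_k\pi_n(h_k)$. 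Your version is more quantitative and bypasses the Montel/compactness step and the limit-identification step entirely; the paper's version does not need the open mapping theorem or the injectivity of $C_\varphi$, only that the range is sequentially closed. Both uses of the hypothesis are legitimate, and the surrounding estimates are identical, so either argument stands on its own.
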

\begin{proof}
 Let $\chi\in {\mathcal D}[-1,1]$ be a test function such that $0\leq \chi\leq 1$ and $\chi(x) = 1$ for $x\in [-\frac{1}{2}, \frac{1}{2}]$ and consider $\chi_k(x):=\chi(\frac{x}{k})$ and $f_k:= f\cdot \chi_k \in S(\R).$ We {\it claim} that ${\mathcal B}:=\left(f_k\circ \varphi\right)_k$ is a bounded set in $S(\R).$ Once the claim is proved we can proceed as follows. Since $S(\R)$ is a Montel space we can assume, passing to a subsequence if necessary, that there exist
 $$
 g:=S(\R)-\lim_{k\to \infty}f_k\circ \varphi.
 $$ Since $C_\varphi$ has closed range then $g = h\circ\varphi$ for some $h\in S(\R).$ The surjectivity of $\varphi$ and the identity $f\circ\varphi = h\circ\varphi$ imply $f = h\in S(\R).$
\par\medskip
 Finally, we check that ${\mathcal B}$ is a bounded set in $S(\R),$ that is,
 \begin{equation}\label{eq:2}
 \sup_{k\in {\mathbb N}}\sup_{x\in{\mathbb R}}\left(1 + x^2\right)^p\cdot \left|(f_k\circ\varphi)^{(\ell)}(x)\right|\end{equation} is finite for every $p$ and $\ell.$ We observe that
 $$
 (f_k\circ\varphi)(x) = (f\circ\varphi)(x)\cdot \chi_k(\varphi(x)),
 $$ and, by Leibniz's formula, (\ref{eq:2}) is less than or equal to a constant times
\begin{equation}\label{eq:3}
 \sup_{k\in {\mathbb N}}\sup_{0\leq n\leq \ell}\sup_{x\in{\mathbb R}}\left(1 + x^2\right)^p\cdot\left|(f\circ\varphi)^{(\ell-n)}(x)\right|\cdot\left|(\chi_k\circ\varphi)^{(n)}(x)\right|.
\end{equation} According to Fa\`a di Bruno formula,
  $$
 (\chi_k\circ\varphi)^{(n)}(x) = \sum_m C_{m,n}\chi_k^{(m_1 +\ldots + m_n)}\big(\varphi(x)\big)\prod_{j=1}^n\left(\varphi^{(j)}(x)\right)^{m_j}
 $$ where $m = (m_1,\ldots, m_n)$ satisfies $m_1 + 2 m_2 + \ldots + n m_n = n.$ It turns out that there are constants $A_n, B_n > 0$ such that
 $$
\begin{array}{ll}
 \left|(\chi_k\circ\varphi)^{(n)}(x)\right| & \begin{displaystyle}\leq A_n \sum_m \frac{1}{k^{m_1+\ldots +m_n}}\prod_{j=1}^n\left|\varphi^{(j)}(x)\right|^{m_j}\end{displaystyle}\\ & \\ & \begin{displaystyle}\leq B_n \sum_m \frac{1}{k^{m_1+\ldots +m_n}}\prod_{j=1}^n\Big(\left(1+x^2\right)^{m_jq_j}\left(1 + \left|\varphi(x)\right|\right)^{m_j}\Big).\end{displaystyle}\end{array}
 $$ Since $\left|\varphi(x)\right|\leq k$ whenever $(\chi_k\circ\varphi)^{(n)}(x)\neq 0$ we conclude that $\left|(\chi_k\circ\varphi)^{(n)}(x)\right|$ is dominated by some polynomial that depends on $n$ but is independent on $k.$ Since $f\circ\varphi\in S(\R)$ then (\ref{eq:3}) is finite and the claim is proved.
\end{proof}

\begin{corollary}\label{cor:nonclosed} Let $\varphi(x) = sign(x)e^{|x|}$ for large values of $|x|.$ Then the range of $$C_\varphi:S(\R)\to S(\R)$$ is not closed.
\end{corollary}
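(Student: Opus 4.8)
The plan is to contradict Proposition~\ref{asterisco}. First I would record that $\varphi$ --- indeed any smooth function coinciding with $sign(x)e^{|x|}$ for $|x|$ large --- is a surjective symbol for $S(\R)$ with property~$(\ast)$: it is a symbol with property~$(\ast)$ by example (b) preceding Proposition~\ref{asterisco} (condition (ii) of Theorem~\ref{symbols} holds since $e^{|x|}\geq|x|$ for large $|x|$, and $(\ast)$ together with (ii) yields (i)), while surjectivity follows from the intermediate value theorem, since $\varphi(x)\to+\infty$ as $x\to+\infty$ and $\varphi(x)\to-\infty$ as $x\to-\infty$. Consequently, if the range of $C_\varphi:S(\R)\to S(\R)$ were closed, Proposition~\ref{asterisco} would force every $f\in C^{\infty}(\R)$ with $f\circ\varphi\in S(\R)$ to belong to $S(\R)$.

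It therefore suffices to produce $f\in C^{\infty}(\R)$ with $f\circ\varphi\in S(\R)$ but $f\notin S(\R)$. I would fix $R>0$ with $\varphi(x)=sign(x)e^{|x|}$ for $|x|\geq R$ and take (by a standard cutoff construction) a smooth $f$ with $f\equiv 0$ on $(-\infty,1]$ and $f(y)=1/y$ for $y\geq 2$. Then $f\notin S(\R)$, since $(1+y^2)|f(y)|=(1+y^2)/y\to\infty$ as $y\to+\infty$.

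To see that $f\circ\varphi\in S(\R)$, the key point is that the exponential growth of $\varphi$ upgrades the merely polynomial decay of $f$ to rapid decay, derivatives included. I would split $\R$ into three regions. For $x\geq\max\{R,\log 2\}$ one has $\varphi(x)=e^{x}\geq 2$, hence $(f\circ\varphi)(x)=e^{-x}$ and $(f\circ\varphi)^{(j)}(x)=(-1)^{j}e^{-x}$, so $(1+x^{2})^{n}\,|(f\circ\varphi)^{(j)}(x)|\to 0$ for every $n,j$. For $x\leq -R$ one has $\varphi(x)=-e^{|x|}<0<1$, so $f\circ\varphi$ vanishes identically there. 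On the remaining bounded interval $f\circ\varphi$ is smooth, hence bounded together with all its derivatives. Thus $\pi_{n}(f\circ\varphi)<\infty$ for every $n$, i.e.\ $f\circ\varphi\in S(\R)$, which contradicts the conclusion drawn from Proposition~\ref{asterisco}; therefore the range of $C_\varphi$ is not closed.

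The argument is short once Proposition~\ref{asterisco} is available. The only point requiring care --- the one I would double-check --- is the verification that $f\circ\varphi\in S(\R)$: the region-by-region analysis shows directly that on the set where $f\circ\varphi$ does not vanish it coincides with $e^{-x}$, which makes the control of all derivatives immediate and sidesteps any appeal to the Fa\`a di Bruno formula.
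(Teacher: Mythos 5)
Your proposal is correct and follows essentially the same route as the paper: both contradict Proposition~\ref{asterisco} by exhibiting an $f\in C^\infty(\R)$ that vanishes on a left half-line and equals $1/y$ for large $y$, so that $f\circ\varphi\in S(\R)$ while $f\notin S(\R)$. The paper states this in one line; you merely spell out the (correct) verifications that $\varphi$ is a surjective symbol with property $(\ast)$ and that $f\circ\varphi$ is rapidly decreasing.
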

\begin{proof}
 Take $f\in C^\infty(\R)$ such that $f(x) = 0$ for $x < 0$ while $f(x) = \frac{1}{x}$ for $x\geq 1.$ Then $f\circ \varphi\in S(\R)$ while $f\notin S(\R).$
\end{proof}

A multiplier of the space $S(\R)$ is a smooth function $F$ satisfying $$F\cdot S(\R) \subset S(\R).$$ It is known that $F\in C^\infty(\R)$ is a multiplier of $S(\R)$ if, and only if, for each $k\in\N$ there exist $C > 0$ and $j\in\N$ such that
$$
\left|F^{(k)}(x)\right| \leq C \big(1+x^2\big)^j.
$$ The space of multipliers of $S(\R)$ is denoted by $O_M.$ It is obvious that $F\in O_M$ is equivalent to $F^\prime\in O_M.$ For $F\in O_M$ we denote by $M_F$ the multiplication operator
$$
M_F:S(\R)\to S(\R), f\mapsto F\cdot f.
$$ We now present some sufficient conditions, in terms of multipliers, for the closed range property of composition operators.

\begin{proposition}\label{prop:suficiente1}
Let $\varphi\in O_M$ be a symbol for $S(\R)$ such that $M_{\varphi'}:S(\R)\to S(\R)$ has closed range. Then $C_{\varphi}:S(\R)\to S(\R)$ has also closed range.
\end{proposition}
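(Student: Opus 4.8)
The plan is to transfer the closed range property from the multiplication operator $M_{\varphi'}$ to the composition operator $C_\varphi$ by relating the two operators through differentiation. The key observation is the chain rule: $(f\circ\varphi)' = (f'\circ\varphi)\cdot\varphi'$, which intertwines $C_\varphi$ with $M_{\varphi'}$ via the derivation operator $D:S(\R)\to S(\R)$, $g\mapsto g'$. More precisely, $D\circ C_\varphi = M_{\varphi'}\circ C_\varphi\circ D$ is not quite an intertwining in the form we want; rather the useful identity is $(C_\varphi f)' = M_{\varphi'}(C_\varphi(f'))$, i.e. $D C_\varphi = M_{\varphi'} C_\varphi D$. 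Since $\varphi\in O_M$ we have $\varphi'\in O_M$, so $M_{\varphi'}$ is well defined and continuous on $S(\R)$.

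First I would set up the functional-analytic framework. By Lemma \ref{lemma:1} (or its proof) $\varphi$ is semiproper and $|\varphi(x)|\to\infty$, so $C_\varphi$ is injective (a nonconstant symbol cannot be locally constant on a set of positive measure, and in fact surjectivity-up-to-image of $\varphi$ forces injectivity of $C_\varphi$ on $S(\R)$ — more carefully, if $f\circ\varphi=0$ then $f$ vanishes on $\varphi(\R)$, which contains a half-line or all of $\R$, and since $f\in S(\R)$ vanishes at infinity anyway one argues $f\equiv 0$ using that $\varphi(\R)$ is an interval with nonempty interior and $f$ is smooth). Then I would take a sequence $(f_n)$ in $S(\R)$ with $f_n\circ\varphi\to g$ in $S(\R)$ and aim to produce $f\in S(\R)$ with $f\circ\varphi=g$. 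Differentiating, $(f_n\circ\varphi)'=M_{\varphi'}(f_n'\circ\varphi)\to g'$ in $S(\R)$. The idea is to show $(f_n'\circ\varphi)$ is itself convergent in $S(\R)$: since $M_{\varphi'}$ has closed range, $g'$ lies in the range, and because $M_{\varphi'}$ has closed range it is by the open mapping theorem a topological isomorphism onto its (closed, hence Fréchet) range, so from $M_{\varphi'}(f_n'\circ\varphi)\to g'$ we can recover that $(f_n'\circ\varphi)$ converges in $S(\R)$ to the unique preimage of $g'$ — provided we know the $f_n'\circ\varphi$ stay in the range of $M_{\varphi'}$, which they do since each equals the preimage-free element $(f_n\circ\varphi)'/\varphi'$ living in $S(\R)$. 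Let $G:=\lim_n f_n'\circ\varphi$ in $S(\R)$. Now $f_n'\circ\varphi=C_\varphi(f_n')\to G$; one then wants to lift $G$ back through $C_\varphi$, but this requires $C_\varphi$ to have closed range — which is what we are trying to prove — so instead I would argue directly: $G$ is a limit of functions of the form $h\circ\varphi$ with $h=f_n'\in S(\R)$, and one checks that $G$ must again be of this form by a concrete pointwise argument. Since $\varphi$ is semiproper, on each compact piece where $\varphi$ is a diffeomorphism the functions $f_n'$ converge uniformly with all derivatives (pull back through $\varphi^{-1}$), and these local limits patch to a function $h\in C^\infty(\R)$ with $h\circ\varphi=G$; the Schwartz estimates on $G$ together with condition (ii) of Theorem \ref{symbols} (the lower bound $|\varphi(x)|\ge|x|^{1/k}$) force $h\in S(\R)$ by an argument analogous to the sufficiency direction in the proof of Theorem \ref{symbols}. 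Finally, having $h\in S(\R)$ with $h\circ\varphi=G=\lim f_n'\circ\varphi$, I would recover $f$ by integration: set $f(y):=f(y_0)+\int_{y_0}^y h(s)\,ds$ for a suitable base point $y_0$ in $\varphi(\R)$, chosen so that $f_n(y_0)\to$ a limit (using that $f_n\circ\varphi$ converges, evaluate at a point $x_0$ with $\varphi(x_0)=y_0$), and then verify $f\circ\varphi=g$ by differentiating both sides (they agree up to a constant, which is killed by the base-point normalization) and $f\in S(\R)$ because $f'=h\in S(\R)$, $f$ vanishes at one end, and $f$ decays rapidly as an antiderivative of a rapidly decreasing function that is integrable near the relevant end of $\varphi(\R)$.

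The main obstacle I anticipate is the step that replaces "lifting $G$ through $C_\varphi$" — i.e. showing that the $S(\R)$-limit $G$ of the sequence $(f_n'\circ\varphi)$ is again of the form $h\circ\varphi$ with $h\in S(\R)$, without already assuming $C_\varphi$ has closed range. One cannot circularly invoke closed range of $C_\varphi$; one must do it by hand using the structure of $\varphi$. The cleanest route is probably: first produce $h\in C^\infty(\R)$ with $h\circ\varphi=G$ using that $\varphi(\R)$ is an interval and the partition-of-unity patching argument already used in the lemma preceding Proposition \ref{asterisco} (cutting off with $\chi_k\circ\varphi$), then upgrade $h\in C^\infty$ to $h\in S(\R)$ using the Schwartz seminorm bounds on $G$, Faà di Bruno, and condition (ii). A secondary subtlety is the base-point normalization when $\varphi(\R)$ is a half-line versus all of $\R$: in the half-line case $\varphi(\R)=[a,+\infty)$ one should take $y_0\to a^+$ or handle the behaviour of $f$ near the endpoint $a$ carefully, though since $f'=h\in S(\R)$ is bounded and integrable on compact sets this is routine. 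I would also need the easy fact that $M_{\varphi'}$ having closed range makes it an isomorphism onto its range (open mapping theorem for Fréchet spaces, the range being a closed subspace of the Fréchet space $S(\R)$), which is standard.
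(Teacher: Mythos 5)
Your overall architecture for the second half of the argument (differentiate, use that $M_{\varphi'}$ with closed range is an isomorphism onto its closed, hence Fr\'echet, range to upgrade $M_{\varphi'}(f_n'\circ\varphi)\to g'$ to convergence of $f_n'\circ\varphi$ itself, iterate over derivatives, and finally convert decay of $f^{(k)}\circ\varphi$ into decay of $f^{(k)}$) is essentially the same as the paper's. The genuine gap is in the step you yourself flag as the main obstacle: producing a smooth $f$ (equivalently $h=f'$) with $f\circ\varphi=g$ in the first place. Both routes you propose for this fail. Pulling back through local inverses of $\varphi$ and patching only works away from the critical points of $\varphi$; at a zero $x_0$ of $\varphi'$ you must show that the smooth limit $G$ of $f_n'\circ\varphi$ near $x_0$ is again of the form $h\circ\varphi$ with $h$ smooth at the critical value $\varphi(x_0)$, and this is precisely the (hard) composite function problem --- already for $\varphi(x)=x^2$ near $0$ this is Glaeser's theorem on even functions, not a routine patching. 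Your fallback, the cut-off argument with $\chi_k\circ\varphi$ from the lemma preceding Proposition \ref{asterisco}, is circular: that lemma produces the local lifts $h_k$ by applying the closed range hypothesis on $C_\varphi:S(\R)\to S(\R)$ to the subspaces $\mathcal{D}[-M_k,M_k]$, which is exactly what you are trying to prove.

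The paper closes this gap by importing two external results: Lemma 1.1 of \cite{bfj2012} shows that the closed range of $M_{\varphi'}$ forces $\varphi'$ to have no flat points in its zero set, and Przestacki's theorem \cite{przstacki1} then yields that $C_\varphi:C^\infty(\R)\to C^\infty(\R)$ has closed range. This hands you $f\in C^\infty(\R)$ with $g=f\circ\varphi$ at the outset, and all the remaining work (your differentiation scheme) goes into showing $f\in S(\R)$; no integration or base-point normalization is then needed. Two smaller points: the decay transfer $\sup_y(1+y^2)^T|f^{(k)}(y)|<\infty$ uses the polynomial \emph{upper} bound on $\varphi$ coming from $\varphi\in O_M$ (together with surjectivity of $\varphi$ onto its image), not the lower bound in condition (ii) of Theorem \ref{symbols} that you cite; and the injectivity of $C_\varphi$ plays no role in either argument.
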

\begin{proof} Let $(f_n)_n\subset S(\R)$ be a sequence such that $(f_n\circ\varphi)_n$ converges to $g$ in $S(\R)$. According to \cite[Lemma 1.1]{bfj2012}, $\varphi^\prime$ does not have flat points in its zero set and then we can apply \cite{przstacki1} to conclude that $C_{\varphi}:C^{\infty}(\R)\to C^{\infty}(\R)$ has closed range. Consequently there is $f\in C^\infty(\R)$ such that $g = f\circ \varphi.$ We aim to prove that $f\in S(\R).$ Since $\varphi$ is a symbol with polynomial growth we have that for each $T>0$ there exists $N>0$ such that

\begin{equation}\label{onecr}
\sup_{x\in\R}(1+|\varphi(x)|^2)^T|f(\varphi(x))|\leq \sup_{x\in\R}(1+ x^2)^N\left|g(x)\right|<\infty.
\end{equation}
\noindent
In the case $\varphi(\R) = \R,$ (\ref{onecr}) is equivalent to
\begin{equation}\label{twocr}
\sup_{x\in\R}(1+|x|^2)^T|f(x)|<\infty.
\end{equation}

\noindent In the case $\varphi(\R)=[a,\infty)$ we can assume without loss of generality that the restriction of $f$ to $(-\infty,a-1)$ is identically null and we also obtain (\ref{twocr}). Since $(f_n\circ\varphi)_n$ converges to $g$ in $S(\R)$ then $\big(M_{\varphi^\prime}(f_n^\prime\circ \varphi)\big)_n$ converges to $g^\prime = \big(f^\prime\circ \varphi\big)\cdot \varphi^\prime$ in $S(\R).$ Consequently
$$
g^\prime \in \overline{M_{\varphi^\prime}\big(S(\R)\big)} = M_{\varphi^\prime}\big(S(\R)\big).
$$ Since the set of zeros of $\varphi^\prime$ is discrete, we can conclude that
$$
f^\prime\circ\varphi\in S(\R).
$$ Moreover, $M_{\varphi^\prime}:S(\R)\to S(\R)$ is an isomorphism into its range, from where it follows that
$$
\lim_{n\to \infty}f_n^\prime\circ\varphi = f^\prime\circ \varphi\ \ \mbox{in}\ \ S(\R).
$$ Now we can proceed inductively to prove that $f^{(k)}\circ \varphi\in S(\R)$ for every $k\in\N.$ Finally, as in (\ref{onecr}) and (\ref{twocr}) we get
$$\sup_{x\in\R}(1 + x^2)^T|f^{(k)}(x)|<\infty$$ for every $T\in N$ and $k\in\N_0.$ The proof is done.
\end{proof}

In \cite{bfj2012}, the multipliers of $S(\R)$ which have closed range are characterized as the functions $F\in O_M$ such that there exist $N,T,c>0$ such that, if we set $I_{x,T}:=[x-1/(1+x^2)^T,x+1/(1+x^2)^T]$, then we have for each $x\in \R$
\begin{itemize}
 \item[(a)] The cardinality of the set $Z(F)\cap I_{x,T}$, the zeros counted with their multiplicity, is smaller than $N.$
\item[(b)] $(1+x^2)^T|F(x)|>c\prod_{i=1}^{k}|x-x_i|$, $(x_i)_{i=1}^{k}$ being the zeros of $F$ in $I_{x,T}$ counting multiplicities. If there are no zeros in $I_{x,T}$ then in the right side one writes only $c$.
\end{itemize}

If $I\subset \R$ is a closed unbounded interval, the space $S(I)$ can be defined in a natural way. For example, if $I = [a, +\infty),$
$$
S(I):=\{f\in C^{\infty}\left(I\right): \ \sup_{x\in I}\sup_{1\leq j\leq n}(1+x^2)^n\left|f^{(j)}(x)\right|<\infty\ \mbox{ for each }n\in \N\}.$$

\noindent We are considering of course the existence of right derivatives in $a$. Then $S(I)$ is a $(FN)$-space. 

\begin{remark} 
\label{multipliers}
The proof of the main theorem of \cite{bfj2012} (with the obvious modifications) permits to characterize the functions $F:I\to \R$ such that $M_F:S(I)\to S(I)$ is well defined and has closed range as those functions satisfying $\left|F^{(j)}(x)\right| \leq C\big(1+x^2\big)^T,$ ($T = T(j)$) for every $j\in\N_0$ and $x\in I$ (multiplier condition) and moreover, (a) and (b) above are satisfied for each $x\in I$, for $I_{x,T}=[x-1/(1+x^2)^T,x+1/(1+x^2)^T]\cap I$
\end{remark}

We see below that, in the case that $\varphi\in O_M$ is not surjective, we only need to require that $C_\varphi:C^\infty(\R)\to C^\infty(\R)$ have closed range and $\varphi^\prime$ satisfies conditions (a) and (b) in some unbounded interval $I.$

\begin{theo}\label{th:suficiente2}
Let $\varphi$ be a non surjective symbol for $S(\R)$ and assume that $C_{\varphi}:C^{\infty}(\R)\to C^{\infty}(\R)$ has closed range and there exists a closed unbounded interval $I$ such that the multiplication operator $M_{\varphi'}:S(I)\to S(I)$ is well defined and it has closed range. Then $C_{\varphi}:S(\R)\to S(\R)$ has also closed range.
\end{theo}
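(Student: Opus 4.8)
The plan is to mimic the structure of the proof of Proposition \ref{prop:suficiente1}, replacing the use of the closed range of $M_{\varphi'}$ on all of $S(\R)$ by the closed range of $M_{\varphi'}$ on $S(I)$, which suffices precisely because $\varphi$ is not surjective and hence $\varphi(\R)$ is a half-line (by Lemma \ref{lemma:1}). Start from a sequence $(f_n)_n\subset S(\R)$ with $(f_n\circ\varphi)_n\to g$ in $S(\R)$. Since $C_\varphi:C^\infty(\R)\to C^\infty(\R)$ has closed range by hypothesis, there is $f\in C^\infty(\R)$ with $g=f\circ\varphi$. By Lemma \ref{lemma:1} we may assume $\varphi(\R)=[a,+\infty)$, and after modifying $f$ outside $\varphi(\R)$ we may assume $f$ vanishes on $(-\infty,a-1)$; we then want to show $f|_{\varphi(\R)}\in S([a,+\infty))$, which together with a Whitney extension (as in the earlier lemma) gives $f\in S(\R)$ and hence closedness of the range in $S(\R)$.

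The main point is to transfer the rapid decay of $g$ on $\R$ to rapid decay of $f$ and its derivatives on the half-line $\varphi(\R)$. First, exactly as in inequalities (\ref{onecr})--(\ref{twocr}), the polynomial growth of $\varphi$ and condition (ii) of Theorem \ref{symbols} give, for each $T$, some $N$ with $\sup_{y\in\varphi(\R)}(1+y^2)^T|f(y)|\leq \sup_{x\in\R}(1+x^2)^N|g(x)|<\infty$, so $f\in S^{0}$ on $\varphi(\R)$ (zeroth-order estimate). Next, differentiate: $g'=(f'\circ\varphi)\cdot\varphi'$, so $(f_n'\circ\varphi)\cdot\varphi'\to g'$ in $S(\R)$. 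The natural restriction map $S(\R)\to S(I)$ is continuous (one may shrink $I$ to a closed unbounded subinterval of $\varphi(\R)$ if needed, using that $\varphi'$ has no flat zeros by \cite[Lemma 1.1]{bfj2012} together with the closed range of $C_\varphi$ on $C^\infty$), so on $S(I)$ we get $M_{\varphi'}(f_n'\circ\varphi|_I)\to g'|_I$. Since $M_{\varphi'}:S(I)\to S(I)$ has closed range, $g'|_I\in M_{\varphi'}(S(I))$, and because $M_{\varphi'}$ is injective with closed range it is an isomorphism onto its image; hence $f'\circ\varphi|_I\in S(I)$ and $f_n'\circ\varphi|_I\to f'\circ\varphi|_I$ in $S(I)$. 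Iterating, $f^{(k)}\circ\varphi|_I\in S(I)$ for every $k$, and then the same growth comparison as before (now applied on $I$, using that $I$ is cofinal in $\varphi(\R)$ and that on the bounded complementary part of $\varphi(\R)$ every smooth function is automatically controlled) yields $\sup_{y\in\varphi(\R)}(1+y^2)^T|f^{(k)}(y)|<\infty$ for all $T,k$, i.e. $f|_{\varphi(\R)}\in S(\varphi(\R))$.

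The step I expect to be the main obstacle is the bookkeeping needed to reconcile the fixed interval $I$ on which $M_{\varphi'}$ is assumed to have closed range with the half-line $\varphi(\R)$: a priori $I$ need not be contained in $\varphi(\R)$, nor cofinal with it in an obviously compatible way, and one must check that closed range of $M_{\varphi'}$ on $S(I)$ forces conditions (a) and (b) of \cite{bfj2012} (via Remark \ref{multipliers}) on a subinterval that is cofinal in $\varphi(\R)$ — after which the discreteness of the zero set of $\varphi'$ lets one patch the $S(I)$-estimates with the trivial estimates on the remaining compact piece to conclude $f^{(k)}\circ\varphi\in S(\varphi(\R))$. Once the estimates on all derivatives of $f$ on $\varphi(\R)$ are in hand, the Whitney extension argument from the lemma preceding Proposition \ref{asterisco} gives an element of $C^\infty(\R)$ that still lies in $S(\R)$, completing the proof that $C_\varphi:S(\R)\to S(\R)$ has closed range.
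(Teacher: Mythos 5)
Your proposal follows essentially the same route as the paper's proof: obtain the candidate $f$ from the closed range of $C_\varphi$ on $C^{\infty}(\R)$, cut $f$ off outside the half-line $\varphi(\R)$, iterate the closed-range/injectivity of $M_{\varphi'}$ on $S(I)$ to get $f^{(k)}\circ\varphi|_I\in S(I)$ together with the convergence $f_n^{(k)}\circ\varphi|_I\to f^{(k)}\circ\varphi|_I$, and then transfer the decay back to $f^{(k)}$ using that $\varphi(I)$ exhausts $\varphi(\R)$ up to a bounded set. The only point to tighten is your opening estimate $\sup_{y\in\varphi(\R)}(1+y^2)^T|f(y)|\leq\sup_{x\in\R}(1+x^2)^N|g(x)|$: global polynomial growth of $\varphi$ is not among the hypotheses here (unlike in Proposition \ref{prop:suficiente1}, where $\varphi\in O_M$), but the fact that $M_{\varphi'}$ is well defined on $S(I)$ forces $\varphi'$, hence $\varphi$, to have polynomial growth on $I$, so the comparison must be made with $x$ ranging over $I$ only — exactly as you do for the higher derivatives — which suffices because $\varphi(I)$ is cofinal in $\varphi(\R)$ and the remaining piece of $\varphi(\R)$ is bounded.
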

\begin{proof}
 Let us assume that $\varphi(\R)= [a,\infty[$ and $\varphi'$ is a closed range multiplier for $S((-\infty,b])$. Let $(f_n)_n\subset S(\R)$ be a sequence such that $(f_n\circ \varphi)_n$ converges in $S(\R)$. Since
$C_{\varphi}:C^{\infty}(\R)\to C^{\infty}(\R)$ has closed range, there exists $f\in C^{\infty}(\R)$ such that
\begin{equation}
\label{noninjective1}
\lim_{n\to \infty} (f_n\circ \varphi)_n=f\circ\varphi\in S(\R).
\end{equation}

 Our aim is to show that $f\in S(\R)$. By cutting off after multiplying by a convenient test function, we can assume that the support of  $f$ is contained in $[a-1,\infty).$ Calculating the derivative in (\ref{noninjective1}) we get

\begin{equation}\label{noninjective2}
\lim_{n\to \infty} (f_n'\circ \varphi)\cdot\varphi' = \big(f'\circ\varphi\big)\cdot\varphi'\in S(\R).
\end{equation}

\noindent Thus the convergence of (\ref{noninjective2}) is also in $S((-\infty,b])$. From the hypothesis on $\varphi'$ we get that
$\varphi'$ is a multiplier on $S((-\infty,b])$ and then it has polynomial increase as $x\to-\infty$. Hence also $\varphi$ has polynomial increase as $x\to -\infty$. Since moreover $M_{\varphi'}:S\left((-\infty,b]\right)\to S\left((-\infty,b]\right)$ has closed range we have that $\varphi'$ has no flat points in its zero set in $(-\infty,b]$ and $M_{\varphi'}$ is an isomorphism into its image, hence
$$\lim_{n\to \infty} f_n'\circ \varphi = f'\circ\varphi\in S((-\infty,b]).$$

\noindent Inductively we get

\begin{equation}
\label{noninjective3}
\lim_{n\to \infty} f_{n}^{(j)}\circ \varphi = f^{(j)}\circ\varphi\in S((-\infty,b]).
\end{equation}

\noindent We use now the polynomial increase of $\varphi$ in $(-\infty,b]$ to get that for each $k\in \N$ there exist $N\in\N$ and $C > 0$ such that

$$\lim_{x\to -\infty} (1+\varphi(x)^2)^k\left|f^{(j)}(\varphi(x))\right|\leq C \lim_{x\to -\infty}(1+x^2)^N\left|(f^{(j)}\circ \varphi)(x)\right|=0$$

\noindent for each $1\leq j\leq k$. Finally, as in \ref{prop:suficiente1}, from $\varphi(\R)=[a,\infty[,$  $\begin{displaystyle}\lim_{x\to -\infty}\varphi(x) = +\infty\end{displaystyle}$ and $f$ identically null in $(-\infty,a-1]$ we get
$$
\lim_{|x|\to +\infty}\big(1 + x^2\big)^k|f^{(j)}(x)| = 0
$$ for every $1\leq j\leq k.$ In the case that $\varphi'$ is a closed range multiplier for $S([b,\infty))$ the argument is similar.
\end{proof}

From the proof, it follows the following version for surjective symbols.

\begin{theo}\label{th:suficiente2b}
Let $\varphi$ be a surjective symbol for $S(\R)$ such that $C_{\varphi}:C^{\infty}(\R)\to C^{\infty}(\R)$ has closed range. Assume there exist closed unbounded intervals $I_1$, $I_2$ such that $\R\setminus (I_1\cup I_2)$ is bounded and the multiplication operator $M_{\varphi'}:S(I_j)\to S(I_j)$ is well defined and it has closed range, $j=1,2$. Then $C_{\varphi}:S(\R)\to S(\R)$ has also closed range.
\end{theo}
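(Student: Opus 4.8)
The plan is to transcribe the proof of Theorem~\ref{th:suficiente2}, carrying it out on two unbounded intervals and gluing at the end. The key difference with the non-surjective case is that now $f$ cannot be modified outside the range of $\varphi$, so one needs the closed range of $M_{\varphi'}$ on a \emph{left} half-line and on a \emph{right} half-line, which between them control $f$ near $-\infty$ and near $+\infty$. First I would dispose of a degenerate case: a closed unbounded interval is $\R$, $(-\infty,b]$ or $[a,\infty)$, and if one of $I_1,I_2$ equals $\R$, then $M_{\varphi'}\colon S(\R)\to S(\R)$ is well defined, so $\varphi'\in O_M$, hence $\varphi\in O_M$, and the statement is exactly Proposition~\ref{prop:suficiente1}. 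Otherwise, since $\R\setminus(I_1\cup I_2)$ is bounded we may relabel so that $I_1=(-\infty,b_1]$ and $I_2=[b_2,\infty)$. By Remark~\ref{multipliers}, the fact that $M_{\varphi'}$ is well defined on $S(I_j)$ forces $|\varphi^{(\ell)}|$ to have polynomial growth on $I_j$ for every $\ell$; integrating, $\varphi$ has polynomial growth on each $I_j$, and since $\varphi$ and its derivatives are bounded on the compact set $\R\setminus(I_1\cup I_2)$ we conclude that $\varphi$ has \emph{global} polynomial growth: for each $T$ there are $C,N>0$ with $(1+\varphi(x)^2)^T\le C(1+x^2)^N$ for all $x\in\R$. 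Also, the closed range of $M_{\varphi'}$ on $S(I_j)$ gives, via \cite{bfj2012} and Remark~\ref{multipliers}, that $\varphi'$ has no flat point in its zero set inside $I_j$ (so $Z(\varphi')\cap I_j$ is discrete) and that $M_{\varphi'}\colon S(I_j)\to S(I_j)$ is an isomorphism onto its closed range.

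Next, let $(f_n)_n\subset S(\R)$ with $f_n\circ\varphi\to g$ in $S(\R)$; then $f_n\circ\varphi\to g$ in $C^\infty(\R)$, so the closed range of $C_\varphi$ on $C^\infty(\R)$ produces $f\in C^\infty(\R)$ with $g=f\circ\varphi$, and it remains to show $f\in S(\R)$. Fix $j\in\{1,2\}$ and work inside $S(I_j)$, using that restriction $S(\R)\to S(I_j)$ and differentiation $S(I_j)\to S(I_j)$ are continuous. I would prove by induction on $k\ge0$ that $f^{(k)}\circ\varphi|_{I_j}\in S(I_j)$ and $f_n^{(k)}\circ\varphi\to f^{(k)}\circ\varphi$ in $S(I_j)$: the case $k=0$ is the restriction of the previous paragraph, and assuming the statement for $k$, differentiation of the convergence gives $M_{\varphi'}(f_n^{(k+1)}\circ\varphi)=(f_n^{(k+1)}\circ\varphi)\cdot\varphi'\to(f^{(k+1)}\circ\varphi)\cdot\varphi'$ in $S(I_j)$; closedness of the range yields $(f^{(k+1)}\circ\varphi)\cdot\varphi'=M_{\varphi'}(v)$ for a unique $v\in S(I_j)$, and discreteness of $Z(\varphi')\cap I_j$ together with continuity of $f^{(k+1)}\circ\varphi$ forces $f^{(k+1)}\circ\varphi=v$ on $I_j$; the isomorphism property of $M_{\varphi'}$ then upgrades the convergence to $f_n^{(k+1)}\circ\varphi\to f^{(k+1)}\circ\varphi$ in $S(I_j)$. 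This is exactly the mechanism already used in Proposition~\ref{prop:suficiente1} and Theorem~\ref{th:suficiente2}.

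Now for each $k$ the function $f^{(k)}\circ\varphi$ lies in $C^\infty(\R)$, its restrictions to $I_1$ and to $I_2$ lie in $S(I_1)$ and $S(I_2)$, and $\R\setminus(I_1\cup I_2)$ is compact, so $\pi_n(f^{(k)}\circ\varphi)<\infty$ for every $n$, i.e. $f^{(k)}\circ\varphi\in S(\R)$. Finally, given $T$ and $k$ and choosing $C,N$ from the first paragraph, we have $\sup_{x\in\R}(1+\varphi(x)^2)^T|f^{(k)}(\varphi(x))|\le C\sup_{x\in\R}(1+x^2)^N|f^{(k)}(\varphi(x))|<\infty$; since $\varphi$ is surjective, every $y\in\R$ equals $\varphi(x)$ for some $x$, hence $\sup_{y\in\R}(1+y^2)^T|f^{(k)}(y)|<\infty$. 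As $T$ and $k$ are arbitrary, $f\in S(\R)$, so $g=f\circ\varphi\in C_\varphi(S(\R))$ and $C_\varphi\colon S(\R)\to S(\R)$ has closed range.

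Apart from this last surjectivity argument and the straightforward gluing over the compact middle interval, the only step that genuinely needs care is the reduction in the first paragraph: one must check that the two \emph{local} multiplier hypotheses (on $S(I_1)$ and $S(I_2)$) really can be arranged to concern a left and a right half-line and that together they force \emph{global} polynomial growth of $\varphi$ — precisely the property, unavailable from a single half-line, that makes the surjective case work. I expect this to be the main obstacle.
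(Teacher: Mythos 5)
Your proof is correct and is essentially the argument the paper intends: the paper gives no separate proof of this theorem, saying only that it \lqq follows from the proof\rqq\ of Theorem \ref{th:suficiente2}, and your reduction to a left and a right half-line, the inductive upgrade of $f_n^{(k)}\circ\varphi\to f^{(k)}\circ\varphi$ in $S(I_j)$ via the isomorphism property of $M_{\varphi'}$, the gluing over the compact middle, and the final transfer through surjectivity and the global polynomial growth of $\varphi$ (as in Proposition \ref{prop:suficiente1}) constitute exactly that adaptation. The additional points you flag as delicate (the degenerate case $I_j=\R$ and deducing global polynomial growth of $\varphi$ from the multiplier condition on the two half-lines) are handled soundly.
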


The examples below illustrate that for non surjective symbols we need only one {\em good branch} to have closed range. However, for surjective symbols we need control on both branches.

\begin{example}
\begin{itemize}
 \item[(a)] Consider the function
$$\varphi_1(x):=\left\{\begin{array}{ll} -e^{-x}+2, & x\leq 0 \\  x, & x\geq 1\end{array}\right.$$ and let $\hat{\varphi_1}$ a $C^\infty$ extension of $\varphi_1$ to $\R$, which always exists by Whitney's theorem \cite{whitney}. Then $\hat{\varphi_1}$ is a surjective symbol and $C_{\hat{\varphi_1}}:S(\R)\to S(\R)$ does not have closed range by Proposition \ref{asterisco} (see the proof of Corollary \ref{cor:nonclosed}). Since $\hat{\varphi_1}$ satisfies that for each $x\in\R$ there exists $y\in \hat{\varphi_1}^{-1}(x)\cap \left(\R\setminus (0,1)\right)$, and then $\hat{\varphi_1}'(y)=\varphi'(y)\neq 0$, we conclude from the main theorem in \cite{przstacki1} that $C_{\hat{\varphi_1}}:C^{\infty}(\R)\to C^{\infty}(\R)$ has closed range.
\item[(b)] Let $x_0>1$ such that $x_0e^{-x_0}=\frac{1}{2e}$ and consider the function
$$\varphi_2(x):=\left\{\begin{array}{ll} xe^{-x} & x\leq x_0 \\  (2x_0-x)+\frac{1}{2e} & x\geq 2x_0\end{array}\right.$$ Let us observe that $\varphi_2\left(\R\setminus (x_0,2x_0)\right)\subseteq (-\infty,e^{-1}]$ and $$\varphi_2(x_0) = \varphi_2(2x_0) = \frac{1}{2e}.$$ If $\hat{\varphi_2}$ is an smooth extension of $\varphi_2$ which is real analytic on $(x_0, 2x_0)$, extension that always exists due to a Whitney extension's theorem \cite{whitney}, then $\hat{\varphi_2}$ does not have any flat critical point. A similar approach to that of the previous example gives that $C_{\hat{\varphi_2}}:C^\infty(\R)\to C^\infty(\R)$ has closed range. Now we apply Theorem \ref{th:suficiente2} to conclude that also $C_{\hat{\varphi_2}}:S(\R)\to S(\R)$ has closed range.
\end{itemize}
\end{example}

Finally we obtain some kind of converse of Proposition \ref{prop:suficiente1} and Theorem \ref{th:suficiente2}.

\begin{theo}
\label{4.9}
Let $\varphi$ be a surjective symbol for $S(\R)$ such that $C_{\varphi}:S(\R)\to S(\R)$ has closed range. We assume that there exists $k$ such that $\varphi'(x)\neq 0$ if $|x|>k$. Then there exists a closed unbounded interval $I$ such that the continuous linear mapping $C_{\varphi}:S(\varphi(I))\to S(I)$ is surjective. Moreover $$M_{\varphi'}:S(I)\to S(I)$$ is well defined and it has closed range of codimension less or equal 1.
\end{theo}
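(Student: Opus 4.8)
The plan is to take $I$ to be one of the two unbounded branches on which $\varphi$ is monotone, to extract from the closed–range hypothesis sharp polynomial control of $\varphi$ and of $\varphi'$ on that branch, and then to read off the three assertions.

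\textit{Step 1: choice of $I$.} Since $\varphi'(x)\neq 0$ for $|x|>k$, $\varphi'$ has constant sign on each of $(k,+\infty)$ and $(-\infty,-k)$, so $\varphi$ is strictly monotone on $[k,+\infty)$ and on $(-\infty,-k]$; by Lemma~\ref{lemma:1} each branch tends to $+\infty$ or to $-\infty$, and since $\varphi([-k,k])$ is compact and $\varphi(\R)=\R$, one branch tends to $+\infty$ and the other to $-\infty$. Relabelling, assume $\varphi$ is increasing to $+\infty$ on $[k,+\infty)$ (the remaining cases are symmetric) and set $I:=[k+1,+\infty)$, so that $\varphi|_I$ is a $C^\infty$ diffeomorphism onto the closed half–line $J:=\varphi(I)$, with inverse $\psi:=(\varphi|_I)^{-1}$ and $\varphi'>0$ on $I$.

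\textit{Step 2: growth control (the crux).} I claim that the closed–range hypothesis forces, on $I$,
$$
|\varphi(x)|\le C\,(1+x^2)^m\qquad\text{and}\qquad |\varphi'(x)|\ge c\,(1+x^2)^{-S}
$$
for suitable constants. Suppose the first estimate fails: diagonalising, choose $x_n\to+\infty$ in $I$, pairwise far apart, with $\varphi(x_n)>(1+x_n^2)^n$, put $y_n=\varphi(x_n)$, fix $\beta\in\mathcal{D}(\R)$ with $\beta(0)=\beta'(0)=1$ and support in a small interval, and set $g:=\sum_n a_n\,\beta(\cdot-x_n)$ with $a_n:=(1+y_n^2)^{-1/2}$. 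Since $a_n<(1+x_n^2)^{-n}$, the $a_n$ decay rapidly in $x_n$, so $g\in S(\R)$ (and $g$ is supported in a half–line inside $I$), while $(1+y_n^2)\,g(\psi(y_n))=(1+y_n^2)^{1/2}\to\infty$, so $g\circ\psi\notin S(J)$. Hence the function $f$ obtained by extending $g\circ\psi$ by $0$ to the left of $J$ belongs to $C^\infty(\R)\setminus S(\R)$. On the other hand, for $x\in I$ we have $f(\varphi(x))=g(\psi(\varphi(x)))=g(x)$, while for $x\notin I$ the value $\varphi(x)$ lies below $\supp f$ (here one uses that, after choosing the $x_n$ large, $\varphi([-k,k])$ and the image of the other branch lie below $\supp g$), so $f\circ\varphi=g\in S(\R)$; moreover $f\circ\varphi$ lies in the closure of $C_\varphi(S(\R))$, because if $g_\ell:=g\,\eta_\ell$ are the usual truncations then $g_\ell\to g$ in $S(\R)$ and the functions $F_\ell:=$ (extension by $0$ of $g_\ell\circ\psi$) belong to $\mathcal{D}(\R)$ with $F_\ell\circ\varphi=g_\ell\to g=f\circ\varphi$ in $S(\R)$ — crucially the truncation is carried out \emph{before} composing with $\varphi$, so no derivative of $\varphi$ ever enters the estimates. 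Closedness of $C_\varphi(S(\R))$ then gives $f\circ\varphi=F\circ\varphi$ with $F\in S(\R)$, and surjectivity of $\varphi$ forces $F=f\in S(\R)$, a contradiction. The lower bound for $\varphi'$ is obtained the same way: if it fails, choose $x_n\to+\infty$ in $I$, pairwise far apart, with $(1+x_n^2)^n|\varphi'(x_n)|\to 0$ (possible since $\varphi'$ is continuous and nonvanishing on $I$), and take $g=\sum_n a_n\beta(\cdot-x_n)$ with $|a_n|=|\varphi'(x_n)|^{1/2}$; then $g\in S(\R)$ but $|(g\circ\psi)'(y_n)|=|g'(x_n)|/|\varphi'(x_n)|=|\varphi'(x_n)|^{-1/2}\to\infty$, so $g\circ\psi\notin S(J)$, and the same witness–plus–approximation argument contradicts closed range. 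This is the step I expect to be the main obstacle: the delicate point is to produce \emph{simultaneously} a non–Schwartz $f$ with $f\circ\varphi\in S(\R)$ and an approximation of $f\circ\varphi$ by elements of $C_\varphi(S(\R))$ that bypasses the (possibly huge) derivatives of $\varphi$.

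\textit{Step 3: the three conclusions.} Granting the two estimates of Step~2 together with Theorem~\ref{symbols}(i) for $\varphi$, on $I$ one has polynomial upper bounds $|\varphi^{(j)}(x)|\le C_j(1+x^2)^{T_j}$ and a polynomial lower bound on $\varphi'$; since $\psi^{(j)}$ is a universal polynomial in $\varphi'\!\circ\psi,\dots,\varphi^{(j)}\!\circ\psi$ divided by a power of $\varphi'\!\circ\psi$, these yield $|\psi^{(j)}(y)|\le C(1+|\psi(y)|^2)^{p_j}$ on $J$, while the polynomial growth of $\varphi$ gives $|\psi(y)|\ge|y|^{1/m}$ for large $y$. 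A computation with the Fa\`{a} di Bruno formula exactly as in the proof of Theorem~\ref{symbols} (now on the half–lines $I$ and $J$) then shows that $g\circ\psi\in S(J)$ for every $g\in S(I)$ and $h\circ\varphi\in S(I)$ for every $h\in S(J)$; hence $C_\varphi\colon S(J)\to S(I)$ and $C_\psi\colon S(I)\to S(J)$ are continuous and mutually inverse, so $C_\varphi\colon S(\varphi(I))\to S(I)$ is a topological isomorphism, in particular surjective. The polynomial bounds on $\varphi^{(j)}$ on $I$ are exactly the statement that $\varphi'$ is a multiplier of $S(I)$, so $M_{\varphi'}\colon S(I)\to S(I)$ is well defined. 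Finally, differentiating $(h\circ\varphi)'=(h'\circ\varphi)\,\varphi'$ gives the operator identity $D\circ C_\varphi=M_{\varphi'}\circ C_\varphi\circ D$ on $S(J)$, where $D$ denotes differentiation on the relevant half–line Schwartz space; $D$ is bijective there (its inverse being $g\mapsto -\int_{\cdot}^{+\infty}g$) and $C_\varphi$ is bijective, so the identity forces $M_{\varphi'}=D\circ C_\varphi\circ D^{-1}\circ C_\varphi^{-1}$ to be bijective; in particular $M_{\varphi'}$ has closed range, of codimension $0$ and hence $\le 1$. (Should one be led instead to a branch $I$ whose endpoint is a simple zero of $\varphi'$, $D$ and $M_{\varphi'}$ acquire a one–dimensional cokernel and the same bookkeeping yields codimension exactly $1$, matching the stated bound.)
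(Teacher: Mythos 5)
Your argument is correct, but it reaches the conclusion by a genuinely different route than the paper. The paper takes $I=[a,\infty)$ with the additional property $\varphi^{-1}(\varphi(I))=I$, proves that $C_\varphi:S(\varphi(I))\to S(I)$ has closed range by extending a convergent sequence $(f_n\circ\varphi)_n$ from $S(I)$ back to $S(\R)$ (via a Seeley--Mityagin extension operator combined with a Whitney extension of $(\varphi|_{[a-\eps,a+\eps]})^{-1}$, so that the global closed-range hypothesis can be invoked), and then upgrades closed range to surjectivity using the density of compactly supported functions; closedness of $M_{\varphi'}(S(I))$ is obtained by producing a continuous functional $v(f)=\int_{\R}T(f)$ and showing $\ker v\subseteq M_{\varphi'}(S(I))$, whence the codimension bound. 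You instead extract quantitative growth information first: the witness-plus-truncation construction of your Step 2 (a half-line variant of the necessity part of Theorem \ref{symbols}, with the essential twist that the approximants $F_\ell$ are obtained by truncating \emph{before} composing with $\psi$, so that $F_\ell\in\mathcal{D}(\R)$, $F_\ell\circ\varphi=g_\ell$ exactly, and no derivative of $\varphi$ enters the estimates) yields a polynomial upper bound for $\varphi$ and a polynomial lower bound for $\varphi'$ on one monotone branch; these bounds make $\varphi|_I$ and its inverse both half-line symbols in the sense of Theorem \ref{symbols}, so $C_\varphi:S(\varphi(I))\to S(I)$ is a topological isomorphism and $M_{\varphi'}$ is in fact bijective. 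Your route buys more --- an isomorphism rather than mere surjectivity, codimension $0$ rather than $\leq 1$, and it essentially contains the paper's subsequent theorem ($\varphi\in O_M$ and $|\varphi'(x)|\geq c(1+x^2)^{-T}$ at infinity) --- while the paper's route avoids any growth analysis of $\varphi'$ at the cost of the extension-operator machinery and the hyperplane argument. Both proofs use surjectivity of $\varphi$ essentially (you at the step $F\circ\varphi=f\circ\varphi\Rightarrow F=f$; the paper in normalizing the limits of $\varphi$ at $\pm\infty$). The only points to flesh out are the routine ones you already flag: that $\sup\varphi(\R\setminus I)$ lies strictly below $\supp f$ once the $x_n$ are chosen large enough (so that $f\circ\varphi=g$ and $F_\ell\circ\varphi=g_\ell$ hold on all of $\R$, not merely on $I$), and the transcription of Theorem \ref{symbols} to half-lines.
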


\begin{proof}
Since $f(x) \mapsto f(-x)$ is an isometry we assume without loss of generality that $$\lim_{x\to +\infty}\varphi(x)=+\infty\ \mbox{and}\  \lim_{x\to-\infty}\varphi(x)=-\infty.$$ From the hypothesis we can take $a > 0$ such that $\varphi'(x) > 0$ for all $x\in [a,\infty)$  and $\varphi^{-1}\left(\varphi[a,\infty)\right) = [a,\infty).$ We consider $I=[a,\infty)$ and denote $\varphi(I)=[b,\infty).$  Let $\varepsilon>0$ such that $\varphi' > 0$ in $[a-\varepsilon,a+\varepsilon]$ and let $\delta>0$ and $\eta > 0$ such that $\varphi([a-\varepsilon,a])=[b-\delta,b]$ and $\varphi([a,a+\varepsilon]) = [b,b+\eta].$

We  check first that  $C_{\varphi}:S(\varphi(I))\to S(I)$ has closed range. Notice that the mapping is well defined since $\varphi$ is a symbol for $S(\R)$. Assume there is $(f_n)_n$ in $S(\varphi(I))$ such that
$(f_n\circ\varphi)_n$ converges to $g$ in $S(I)$. We aim to obtain $f\in S(\varphi(I))$ such that $g = f\circ \varphi.$ By Seeley-Mityagin's theorem \cite{mit,seeley} there exists a continuous linear extension operator $E:C^{\infty}(I)\to C^{\infty}(\R)$. Let $\rho$ be a test function such that $0\leq\rho\leq 1$ and with the properties that there is $0<t<\eps/2$ with $\rho=1$ in $(a-t,a+t)$ and its support is contained in $(a-\varepsilon/2,a+\varepsilon/2)$. For $f\in C^\infty(I)$, we define $T(f)(x)=E(f)(x)$ if $x\geq a$ and $T(f)(x)=E(f)(x)\rho(x)$ if $x<a$. Now $T:C^{\infty}(I)\to C^{\infty}(\R)$ is a continuous linear extension operator and

\begin{equation}
\label{extsupp}
\mbox{supp }\left(T(C^{\infty}(I))\right)\subseteq (a-\varepsilon/2,\infty).
\end{equation}

In the above formula we mean that the supports of the functions in $T(C^{\infty}(I)))$ are closed intervals contained in $(a-\varepsilon/2,\infty)$.  Since $\varphi^{-1}$ is smooth in $[b-\delta,b+\eta]$  we can use Whitney's extension theorem to consider an extension $\gamma$ of $(\varphi|_{[a-\eps,a+\eps]})^{-1}$ which is smooth in $(-\infty,b+\eta)$. Since $\gamma(b-\delta) = a-\varepsilon$, there is $s>0$ such that $\gamma(x)<a-\eps/2$ if $x\in (b-\delta-2s,b-\delta+2s)$. If we consider $\alpha\in \mathcal{D}(b-\delta-2s,b-\delta+2s)$ such that $0\leq \alpha\leq 1$ and $\alpha = 1$ in $(b-\delta-s,b-\delta+s)$, we can define 

$$h(x):=\left\{\begin{array}{ll}
(\varphi|_{[a-\eps,a+\eps]})^{-1}(x), & x\in [b-\delta,b+\eta)\\
(\alpha\gamma)(x), & x\in (-\infty,b-\delta)
\end{array}\right. .$$

We have that $h:(-\infty, b+\eta)\to {\mathbb R}$ is a smooth extension of $(\varphi|_{[a-\eps,a+\eps]})^{-1}$
which satisfies
 
\begin{equation}
\label{extleft}
h(-\infty,b-\delta)\subseteq (-\infty,a-\varepsilon/2)
\end{equation}

\noindent and also

\begin{equation}
\label{extleft2}
h(x)=0 \ \mbox{for}\ x\leq b-\delta-2s.
\end{equation}

\noindent Now we define the following  function

$$\hat{f_n}(x) :=\left\{\begin{array}{ll}T(f_n\circ\varphi)\circ h(x),&x\in (-\infty,b)\\  f_n(x), & x\in [b,\infty)\end{array} \right.$$

\noindent Notice that differentiability of $f_n$ implies that $\hat{f_n}$ is smooth on a neighbourhood on $b$. We mean, near $b$ we have  $\hat{f_n}(x)=T(f_n\circ\varphi)\circ h(x)$  if $x<b$ and also $\hat{f_n}(x)=f_n(x)=f_n\circ \varphi\circ(\varphi|_{[a-\eps,a+\eps]})^{-1}(x)=T(f_n\circ \varphi)\circ h(x)$ if $x\in [b, b+\eta)$, i.e $\hat{f_n}$ near $b$ is the composition between two smooth functions. By (\ref{extsupp}) and (\ref{extleft}) we can write

$$\hat{f_n}(x)=\left\{\begin{array}{ll}0,&x\in (-\infty,b-\delta)\\ T(f_n\circ\varphi)\circ (\varphi|_{[a-\eps,a]})^{-1}(x), & x\in [b-\delta,b] \\ f_n(x), & x\in [b,\infty)\end{array} \right.$$

 Since $f_n\in S(\varphi(I))$ we get $\hat{f_n}\in S(\R)$.  Moreover we have
 
 $$\left(\hat{f_n}\circ\varphi\right)(x) = \left\{\begin{array}{ll}0,&x\in \varphi^{-1}(-\infty,b-\delta)\\ T(f_n\circ\varphi)(x), & x\in \varphi^{-1}[b-\delta,b] \\ (f_n\circ\varphi) (x), & x\in \varphi^{-1}(b,\infty) = (a, \infty)\end{array} \right.$$

 Since $\varphi$ is semiproper then $K = \varphi^{-1}\left([b-\delta,b]\right)$ is a compact set. From the fact that $T$ is an extension operator it follows that $T(f_n\circ \varphi)$ converges to $T(g)$ in $C^{\infty}(\R).$ In particular, $T(f_n\circ\varphi)$ and all its derivatives converge uniformly to $T(g)$ on the compact set $K.$ 
Together with the convergence of $(f_n\circ\varphi)_n$ to $g$ in $S(I)$ we get that $\hat{f_n}\circ \varphi$ converges to $T(g)$ in $S(\R)$. Hence there exists $\hat{f}\in S(\R)$ such that $T(g)=\hat{f}\circ \varphi$. If we denote by $f$ the restriction of $\hat{f}$ to $\varphi(I)$, we have that $f\in S(\varphi(I))$ and  $g=f\circ\varphi$.
\par\medskip
Each $f\in S(I)$ which is compactly supported satisfies that $f\circ\varphi^{-1}$ is compactly supported in $\varphi(I)$ and $C^{\infty}$. Since $C_{\varphi}:S(\varphi(I))\to S(I)$ has closed range and the range contains all compactly supported functions in $S(I)$ we conclude that it is surjective. We proceed to show that under these conditions $\varphi'$ is a multiplier for $S(I)$ and $M_{\varphi'}:S(I)\to S(I)$ has closed range.

\noindent {\em Claim}.  $(f\circ\varphi)\varphi'\in S(I)$ for each $f\in S(\varphi(I))$

\noindent{\em Proof of the claim.} Since $\varphi$ is a symbol for $S(\R)$, for each $n\in \N$ there exists $N\in\N$ such that
$|\varphi^{(j)}(x)|\leq (1+|\varphi(x))|^{N}$ for $1\leq j\leq n$ and there exists $k\in\N$ such that $|x|\leq |\varphi(x)|^k$ if
$|x|>k$. Applying Fa\'{a} di Bruno formula and Leibnitz rule we can then obtain  that for each $n,k\in\N$ there exist $C, M>0$ such that

$$\sup_{x\in I}(1+|x|^2)^k \left|((f\circ\varphi)\varphi')^{(n)}(x)\right|\leq
C\sup_{t\in  \varphi(I)}\sum_{j=1}^{n}(1+|t|^2)^M|f^{(j)}(t)|,$$

\noindent the right term being bounded since $f\in S(\varphi(I))$. The claim is proved. Surjectivity of $C_{\varphi}:S(\varphi(I))\to S(I)$ permits to conclude that $$M_{\varphi'}:S(I)\to S(I)$$ is well defined, and then continuous by the closed graph theorem.

  Let
$T:C^{\infty}(I)\to C^{\infty}(\R)$  be the continuos linear extension operator used in the first part of the proof. The construction implies  $T(S(I))\subseteq S(\R)$ and then $T:S(I)\to S(\R)$ is continuous by the closed graph theorem.  Define $v\in S(I)'$ as

 $$v(f)=\int_{-\infty}^{\infty}T(f)(t)dt.$$

 \noindent Let $f_0=(1/\sqrt{\pi}) e^{-x^2} \in S(\R)$ with $\int_{-\infty}^{\infty}f_0(t)dt=1$.
For $f\in S(I)$  define

 $$g(x)=\int_{-\infty}^{x}(T(f)(t)-\lambda f_0(t))dt\quad \lambda=v(f).$$

\noindent We have by L'Hopital rule that $g\in S(\R)$ and

 $$T(f)=g'+\lambda f_0, \quad \lambda=v(f).$$

 \noindent From this we get that for each $f\in S(\varphi(I))$ there exists $g\in S(\R)$ such that

$$T(f\circ\varphi)=g'+\lambda f_0, \quad \lambda=v(f\circ\varphi).$$

\noindent  If we restrict to $I$ we get

\begin{equation}
\label{decomposition}
(f\circ \varphi )= g'+\lambda f_0,\quad \lambda=v(f\circ\varphi).
\end{equation}

\noindent Surjectivity of $C_{\varphi}:S(\varphi(I))\to S(I)$ permits to get $h\in S(\varphi(I))$ such that $g(x) = \left(h\circ\varphi\right)(x)$ for each $x\in I$. This means that if $f\in S(\varphi(I))$ and $f\circ \varphi\in Ker(v)$ then

$$f\circ \varphi = (h'\circ\varphi)\varphi'\in M_{\varphi'}(S(I)).$$

\noindent Since $S(I)=\{f\circ \varphi:\ f\in S(\varphi(I)\}$ we conclude that $M_{\varphi'}(S(I))$ contains the closed hyperplane $Ker(v)$, and then it is closed.

\end{proof}

\begin{theo}
Let $\varphi$ be a surjective symbol for $S(\R)$ such that $C_{\varphi}:S(\R)\to S(\R)$ has closed range. Assume that there exists $k$ such that $\varphi'(x)\neq 0$ if $|x|>k$. Then $\varphi\in O_M$ and there exist $c>0, T> 0$ such that 
$$
\left|\varphi'(x)\right| \geq c\left(1+x^2\right)^{-T}
$$ for $|x|$ large enough.
\end{theo}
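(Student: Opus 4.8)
The plan is to apply the previous theorem (Theorem \ref{4.9}) to produce a closed unbounded interval $I$ on which $M_{\varphi'}:S(I)\to S(I)$ is well defined, has closed range, and has codimension at most $1$; then transfer the resulting information about $\varphi'$ from $I$ to the whole line using the hypothesis $\varphi'(x)\neq 0$ for $|x|>k$ together with the structure of $\varphi$ as a symbol. Since $M_{\varphi'}$ being well defined on $S(I)$ already says that $\varphi'$ satisfies the multiplier estimates $\bigl|(\varphi')^{(j)}(x)\bigr|\leq C(1+x^2)^{T(j)}$ on $I$, and since $\varphi'\in O_M$ is equivalent to $\varphi\in O_M$, the first order of business is to upgrade "on $I$" to "on all of $\R$". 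For this I would argue that on the complementary half-line $\R\setminus I$ one can run the same construction of Theorem \ref{4.9} after the reflection $x\mapsto -x$ (which is an isometry of all the spaces involved), so that $\varphi'$ is also a closed-range multiplier on a closed unbounded interval $I'$ covering the other end; the bounded middle piece is harmless because $\varphi\in C^\infty$. Hence $\varphi'\in O_M$, i.e. $\varphi\in O_M$.

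Next I would extract the lower bound on $|\varphi'|$. On $I$, the characterization of closed-range multipliers recalled before Remark \ref{multipliers} applies: there exist $N,T,c>0$ such that, with $I_{x,T}=[x-1/(1+x^2)^T,\,x+1/(1+x^2)^T]\cap I$, the set $Z(\varphi')\cap I_{x,T}$ has at most $N$ points counted with multiplicity and $(1+x^2)^T|\varphi'(x)|>c\prod_i|x-x_i|$ over the zeros $x_i$ of $\varphi'$ in $I_{x,T}$. But by hypothesis $\varphi'$ has no zeros at all for $|x|>k$, so for $x\in I$ with $|x|$ large the interval $I_{x,T}$ contains no zero of $\varphi'$, and condition (b) degenerates to $(1+x^2)^T|\varphi'(x)|>c$, i.e. $|\varphi'(x)|\geq c(1+x^2)^{-T}$. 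The same estimate holds on $I'$ by the reflected argument, so it holds for all $|x|$ large enough, which is exactly the claim.

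The main obstacle I anticipate is bookkeeping around the non-surjective-versus-surjective distinction and making sure the reflected construction really does cover the second end: Theorem \ref{4.9} as stated produces one good half-line $I$, and I need to know that when $\varphi$ is surjective with $\lim_{x\to+\infty}\varphi(x)=+\infty$ and $\lim_{x\to-\infty}\varphi(x)=-\infty$, the hypothesis "$C_\varphi$ has closed range" forces the analogous conclusion at $-\infty$ as well. This should follow by applying Theorem \ref{4.9} to the symbol $\psi(x):=-\varphi(-x)$ (again surjective, still closed range, still $\psi'(x)\neq0$ for $|x|>k$), whose good half-line at $+\infty$ pulls back to a good half-line for $\varphi$ at $-\infty$. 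Once both ends are controlled, patching over the compact middle interval $[-k',k']$ — where $\varphi'$ and all its derivatives are simply bounded, and where $\varphi'$ may vanish but only finitely often with finite multiplicity — is routine and does not affect either the $O_M$ conclusion or the asymptotic lower bound, which is only claimed for $|x|$ large.
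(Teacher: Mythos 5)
Your proposal is correct and follows essentially the same route as the paper: the authors likewise obtain two closed unbounded intervals $I_1\subset[0,\infty)$ and $I_2\subset(-\infty,0]$ on which $M_{\varphi'}$ is a well-defined closed-range operator (by rerunning the proof of Theorem \ref{4.9} at both ends, which your reflection $\psi(x)=-\varphi(-x)$ accomplishes equivalently) and then invoke Remark \ref{multipliers} together with the equivalence $\varphi\in O_M\Leftrightarrow\varphi'\in O_M$ and the absence of zeros of $\varphi'$ for $|x|>k$ to get the lower bound. Your write-up just makes explicit the details the paper leaves implicit.
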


\begin{proof}
The  proof of Theorem \ref{4.9} shows that under the hypothesis there are in fact two intervals, $I_1\subset [0,\infty)$ and $I_2\subset [-\infty,0)$ such that $M_{\varphi'}: S(I_j)\to S(I_j)$ is a well defined closed range operator. Hence we get the conclusion from Remark \ref{multipliers} and the equivalence $\varphi\in O_M$ if and only if $\varphi'\in O_M$.
\end{proof}

\noindent
{\bf Acknowledgements}\par\medskip The research was partially supported by the projects MTM2013-43540-P, GVA Prometeo II/2013/013 and ACOMP/2015/186.

Antonio Galbis: Departament d'An\`{a}lisi Matem\`{a}tica, Universitat de Val\`{e}ncia, 46100 Burjassot (Val\`encia), Spain. E-mail: antonio.galbis@uv.es 
\par
Enrique Jord\'a:
Instituto Universitario de MAtem\'atica Pura y Aplicada IUMPA, Universitat Polit\`ecnica de Val\`encia, Plaza Ferr\'andiz y Carbonell, s/n
E-03801 Alcoy (Alicante), Spain. E-mail: ejorda@mat.upv.es

\end{document}